\documentclass[notitlepage]{amsart}

\usepackage[dvipsnames]{xcolor}
\usepackage{amsmath,amssymb,amsthm,amsfonts,enumerate,tikz,bm}
\usepackage{mathrsfs}
\usetikzlibrary{matrix,arrows,positioning,calc}
\usepackage[all]{xy}
\usepackage[bookmarksnumbered,colorlinks]{hyperref}
\usepackage{url}
\numberwithin{equation}{section}
\usepackage{graphicx}
\usepackage[T1]{fontenc}
\usepackage{textcomp}
\usepackage{palatino,helvet}

\usepackage{footmisc}
\usepackage{float}
\usepackage{caption}

\newtheorem{pro}{Proposition}[section]
\newtheorem{teo}[pro]{Theorem}
\newtheorem{defi}[pro]{Definition}
\newtheorem{lem}[pro]{Lemma}
\newtheorem{cor}[pro]{Corollary}
\newtheorem{rk}[pro]{Remark}

\newcommand{\Ext}{\mathrm{Ext}}
\newcommand{\Tor}{\mathrm{Tor}}

\newcommand{\Hom}{\mathrm{Hom}}

\newcommand{\A}{\mathcal{A}}

\newcommand{\F}{\mathcal{F}}

\newcommand{\Le}{\mathcal{L}}

\newcommand{\X}{\mathcal{X}}
\newcommand{\Y}{\mathcal{Y}}

\newcommand{\W}{\mathcal{W}}
\newcommand{\pd}{\mathrm{pd}}
\newcommand{\op}{\mathrm{op}}

\newcommand{\Gpd}{\mathrm{Gpd}}

\newcommand{\Proj}{\mathcal{P}}

\newcommand{\Inj}{\mathcal{I}}

\newcommand{\id}{\mathrm{id}}
\newcommand{\Gid}{\mathrm{Gid}}
\newcommand{\resdim}{\mathrm{resdim}}

\newcommand{\coresdim}{\mathrm{coresdim}}

\newcommand{\Modu}{\mathrm{Mod}}
\newcommand{\Ker}{\mathrm{Ker}}
\newcommand{\Ima}{\mathrm{Im}}
\newcommand{\GP}{\mathcal{GP}}

\newcommand{\GF}{\mathcal{GF}}
\newcommand{\Flat}{\mathcal{F}}

\newcommand{\fd}{\mathrm{fd}}
\newcommand{\Gfd}{\mathrm{Gfd}}

\newcommand{\GI}{\mathcal{GI}}

\newcommand{\Coker}{\mathrm{CoKer}}

\newcommand{\Ch}{\mathrm{Ch}}

\usepackage{latexsym,amssymb,amscd} 
\usepackage{amsmath}
\usepackage[all]{xypic}
\usepackage{amsthm}

\newcommand{\cogorro}{\vee}
\newcommand{\ortogonal}{\bot}
\newcommand{\gorro}{\wedge}

\begin{document}
\title[Strongly $FP$-injective dimensions and Gorenstein projective precovers]{Strongly $FP$-injective dimensions and Gorenstein projective precovers}

\author{V\'ictor Becerril}
\address[V. Becerril]{Centro de Ciencias Matem\'aticas. Universidad Nacional Aut\'onoma de M\'exico. 
 CP58089. Morelia, Michoac\'an, M\'EXICO}
\email{victorbecerril@matmor.unam.mx}
\thanks{2020 {\it{Mathematics Subject Classification}}. Primary 18G25. Secondary 18G10, 18G20, 16E10.}
\thanks{Key Words: Gorenstein projective, Strongly $FP$-injective, Gorenstein Symmetry Conjecture, Cotorsion pair, Hovey  triple, Auslander class}
\begin{abstract} 
The existence of the Gorenstein projective precovers over $R$ an arbitrary ring, as well as the completeness of the Gorenstein projective cotorsion pair $(\GP , \GP ^{\ortogonal})$, are open questions. In this paper, we provide some answers to these questions and use the tool developed to confirm the Gorenstein Symmetry Conjecture, under two different situations.   We also analyze situations where the finiteness of $\mathrm{silp} (R)$ implies $\mathrm{spli} (R) $ finite, and its counterpart.
\end{abstract}  
\maketitle

\section{Introduction} 
  For $R$ an associative ring  with identity, the Gorenstein projective, injective and flat $R$-modules where introduced in \cite{EnJen93}, since then the Gorenstein homological algebra has been developing intensively as a relative version of homological algebra that replaces the classical projective, injective and flat modules and resolutions with the Gorenstein versions. While in the classical homological algebra is know the existence of projective resolutions (resp. injective and flat resolutions)  for any $R$-module without restrictions over $R$,  the situation is different for the Gorenstein version.  The question: \textit{What is the most general class of rings over which all modules have Gorenstein projective (injective) resolutions?} still open. Is know that all module of finite Gorenstein projective dimension has a special Gorenstein projective precover. Also, if all Gorenstein projective is Gorenstein flat and all Gorenstein flat has finite Gorenstein projective dimension, then the class of Gorenstein projective modules is special precovering \cite{EA17}.   Also, if all finitely presented has finite Gorenstein projective dimension $\leq n$,  then $\GP$ is special precovering \cite{Estrada23}. Furthermore the pair $(\GP(R) , \GP (R)^{\ortogonal})$ is an hereditary and complete cotorsion pair.   Although it was recently proven that   the pair $(\GP(R) , \GP (R)^{\ortogonal})$ is always an hereditary cotorsion pair \cite{Cortes}, the situation of being complete remains unclear. Another question proposed by M. Cort\'es-Izurdiaga, J. Šaroch \cite{Cortes} is: \textit{What classes of modules appear in the cotorsion pair generated  by these modules?}
  In this paper we give a partial response to each one of this questions. 

An Artin algebra $A$ is called  \textit{Gorenstein} if is self-injective on both sides and these dimensions coincide. To date, no Artin algebra with distinct self-injective dimensions is known, so it makes sense in representation theory the famous \textit{Gorenstein Symmetry Conjecture} which states  that for an Artin algebra $A$ the left and the right injective dimension of the regular module always coincide. The Gorenstein symmetry conjecture is open in general but true for left weakly Gorenstein algebras, as shown by C. M. Ringel, P. Zhang  \cite[Questions 9.3]{Ringel} and R. Marczinzik \cite[Proposition 3.9 (2)]{Mar}. 

Another open question concerns  invariants $\mathrm{silp} (R)$ and $\mathrm{spli}(R)$ (the sumpremum of the projective lengths of injectives, or $\pd (\Inj (R))$) which are known are equal, if they are both finite by A. Beligiannis and I. Reiten \cite[Chapter VII . Prop. 1.3 (vi)]{Bel-Rei}. Nevertheless, as Gedrich
and Gruenberg point out in \cite{Gedrich}, \textit{it is not clear whether the finiteness of one of these implies the finiteness of the other}, i.e. whether we always have an equality $\mathrm{silp} (R) = \mathrm{spli}(R)$.

In the special case where $R$ is an Artin algebra, the equality $\mathrm{silp} (R) = \mathrm{spli}( R) $ is equivalent to the Gorenstein Symmetry Conjecture \cite[Conjecture 13]{AuRe95}, \cite[§11]{Bel} and \cite[Chapter VII]{Bel-Rei}. Therefore, in this paper we focus on analysing when $\mathrm{silp} (R) < \infty$ implies $\mathrm{spli} (R) < \infty$, and also when  $\mathrm{spli} (R) < \infty$ implies $\mathrm{silp} (R) < \infty$.  
In fact, we give two situations where this occurs, specifically; If every left injective has finite flat dimension, then $\mathrm{silp} (R) < \infty$ implies $\mathrm{spli} (R) < \infty.$ Furthermore, if the class of left projectives has finite Strongly FP-injective dimension, then $\mathrm{spli} (R) < \infty$ implies $\mathrm{silp} (R) < \infty$. From where we confirm the Gorenstein Symmetry Conjecture in both situations. 

\section{Preliminaries}
In what follows, we shall work with left modules over an associative ring $R$ with identity, denoted $\Modu (R)$.
Projective, injective and flat $R$-modules will be important to present some definitions, remarks and examples. The classes of projective left and right $R$-modules will be denoted by $\mathcal{P}(R)$ and $\mathcal{P}(R^{\rm op})$, respectively. Similarly, we shall use the notations $\mathcal{I}(R)$, $\mathcal{I}(R^{\rm op})$, $\mathcal{F}(R)$ and $\mathcal{F}(R^{\rm op})$ for the classes of injective and flat modules in $\Modu(R)$ and $\Modu(R^{\rm op})$, respectively. 
Concerning functors defined on modules, $\Ext^i_R(-,-)$ denotes the right $i$-th derived functor of $\Hom_R(-,-)$. If $M \in \Modu(R^{\rm op})$ and $N \in \Modu(R)$, $M \otimes_R N$ denotes the tensor product of $M$ and $N$. 
\subsection*{Orthogonality}

Let $\mathcal{X} \subseteq \Modu(R)$, $i \geq 1$ be a positive integer and $N \in \Modu(R)$. The expression $\Ext^i_R(\mathcal{X},N) = 0$ means that $\Ext^i_R(X,N) = 0$ for every $X \in \mathcal{X}$. Moreover, $\Ext^i_R(\mathcal{X,Y}) = 0$ if $\Ext^i_R(\mathcal{X},Y) = 0$ for every $Y \in \mathcal{Y}$. The expression $\Ext^i_R(N,\mathcal{Y}) = 0$ has a similar meaning. Moreover, by $\Ext^{\geq 1}_R(M,N) = 0$ we mean that $\Ext^i_R(M,N) = 0$ for every $i \geq 1$. One also has similar meanings for $\Ext^{\geq 1}_R(\mathcal{X},N) = 0$, $\Ext^{\geq 1}_R(N,\mathcal{Y}) = 0$ and $\Ext^{\geq 1}_R(\mathcal{X,Y}) = 0$. We can also replace $\Ext$ by $\Tor$ in order to obtain the notations for $\Tor$-orthogonality. The right orthogonal complements of $\mathcal{X}$ will be denoted by
\begin{align*}
\mathcal{X}^{\perp_i}  & := \{ M \in \Modu(R) {\rm \ : \ } \Ext^i_{R}(\mathcal{X},M) = 0 \}  & & \mbox{ and } &
\mathcal{X}^{\perp}  & := \bigcap_{i \geq 1} \mathcal{X}^{\perp_i}.
\end{align*}
The left orthogonal complements, on the other hand, are defined similarly.

\subsection*{Relative homological dimensions} 

There are homological dimensions defined in terms of extension functors. Let $M \in  \Modu (R)$ and $\mathcal{X}, \mathcal{Y} \subseteq  \Modu(R)$. The \emph{injective dimensions of $M$ and $\mathcal{Y}$ relative to $\mathcal{X}$} are defined by
\begin{align*}
\id_{\mathcal{X}}(M) &  := \inf \{ m \in \mathbb{Z}_{\geq 0} \text{ : } \Ext _R ^{\geq m+1}(\mathcal{X},M) = 0  \} & & \mbox{ and } &
\id_{\mathcal{X}}(\mathcal{Y}) & := \sup \{ \id_{\mathcal{X}}(Y) \text{ : } Y \in \mathcal{Y} \}.
\end{align*}
In the case where $\mathcal{X} =  \Modu(R)$, we write 
\begin{align*}
\id_{ \Modu(R)}(M) & = \id(M) & & \text{and} & \id_{\mathsf{Mod}(R)}(\mathcal{Y}) & = \id(\mathcal{Y})
\end{align*} 
for the (absolute) injective dimensions of $M$ and $\mathcal{Y}$.  Dually we can define the relative  dimensions $\pd _{\X} (M)$,  $\pd _{\X} (\Y)$ and $\pd (M)$, $\pd (\Y)$.

By an \emph{$\mathcal{X}$-resolution of $M$} we mean an exact complex 
\[
\cdots \to X_m \to X_{m-1} \to \cdots \to X_1 \to X_0 \to M \to 0
\]
with $X_k \in \mathcal{X}$ for every $k \in \mathbb{Z}_{\geq 0}$.  If $X_k = 0$ for $k > m$, we say that the previous resolution has \emph{length} $m$. The \emph{resolution dimension relative to $\mathcal{X}$} (or the \emph{$\mathcal{X}$-resolution dimension}) of $M$ is defined as the value
\[
\resdim_{\mathcal{X}}(M) := \min \{ m \in \mathbb{Z}_{\geq 0} \ \mbox{ : } \ \text{there exists an $\mathcal{X}$-resolution of $M$ of length $m$} \}.
\]
Moreover, if $\mathcal{Y} \subseteq  \Modu (R)$ then
\[
\resdim_{\mathcal{X}}(\mathcal{Y}) := \sup \{ \resdim_{\mathcal{X}}(Y) \ \mbox{ : } \ \text{$Y \in \mathcal{Y}$} \}
\]
defines the \emph{resolution dimension of $\mathcal{Y}$ relative to $\mathcal{X}$}. The classes of objects with bounded (by some $n \geq 0$) and finite $\mathcal{X}$-resolution dimensions will be denoted by
\begin{align*}
\mathcal{X}^\wedge_n & := \{ M \in  \Modu(R) \text{ : } \resdim_{\mathcal{X}}(M) \leq n \} & \text{and} & & \mathcal{X}^\wedge & := \bigcup_{n \geq 0} \mathcal{X}^\wedge_n.
\end{align*}
Dually, we can define \emph{$\mathcal{X}$-coresolutions} and the \emph{coresolution dimension of $M$ and $\mathcal{Y}$ relative to $\mathcal{X}$} (denoted $\coresdim_{\mathcal{X}}(M)$ and $\coresdim_{\mathcal{X}}(\mathcal{Y})$). We also have the dual notations $\mathcal{X}^\vee_n$ and $\mathcal{X}^\vee$ for the classes of $R$-modules with bounded and finite $\mathcal{X}$-coresolution dimension.

\subsection*{Approximations}

Given a class $\mathcal{X}$ of left $R$-modules  and $M \in \Modu (R)$, recall that a morphism $\varphi \colon X \to M$ with $X \in \mathcal{X}$ is an \emph{$\mathcal{X}$-precover of $M$} if for every morphism $\varphi' \colon X' \to M$ with $X' \in \mathcal{X}$, there exists a morphism $h \colon X' \to X$ such that $\varphi' = \varphi \circ h$. An $\X$-precover $\varphi $ is special if $\Coker (\varphi) = 0$ and $\Ker (\varphi) \in \X^{\ortogonal _1}$.

 A class $\mathcal{X} \subseteq \Modu(R)$ is (\emph{pre})\emph{covering} if every left $R$-module has an $\mathcal{X}$-(pre)cover. Dually, one has the notions of (\emph{pre})\emph{envelopes} and (\emph{pre})\emph{enveloping} and \emph{special (pre)enveloping} classes. 

\subsection*{Cotorsion pairs}

Two classes $\mathcal{X,Y} \subseteq \Modu (R)$ of left $R$-modules for a \emph{cotorsion pair} $(\mathcal{X,Y})$ if $\mathcal{X} = {}^{\perp_1}\mathcal{Y}$ and $\mathcal{Y} = \mathcal{X}^{\perp_1}$. 

A cotorsion pair $(\mathcal{X,Y})$ is:
\begin{itemize}
\item \emph{Complete} if $\mathcal{X}$ is special precovering, that is, for every $M \in \Modu(R)$ there is a short exact sequence $0 \to Y \to X \to M \to 0$ with $X \in \mathcal{X}$ and $Y \in \mathcal{Y}$; or equivalently, if $\mathcal{Y}$ is special preenveloping. 

\item \emph{Hereditary} if $\Ext^{\geq 1}_R(\mathcal{X,Y}) = 0$; or equivalently, if $\mathcal{X}$ is resolving (meaning that $\mathcal{X}$ is closed under extensions and kernels of epimorphisms, and contains the projective left $R$-modules) or $\mathcal{Y}$ is coresolving.  
\end{itemize}
Note that if $(\mathcal{X,Y})$ is a hereditary cotorsion pair, then $\mathcal{X} = {}^{\perp}\mathcal{Y}$ and $\mathcal{Y} = \mathcal{X}^\perp$.



\section{Precovers from Strongly FP-injectives}
We recall that the class $\GP $ of \textbf{Gorenstein projective $R$-modules} consist of cycles of acyclic complexes of left projective $R$-modules which remains exact after applying the functor $\Hom _R (-, P)$ for all $P \in \Proj (R)$. In this section, we will study situations where the class $\GP$  is special precovering and when $(\GP, \GP ^{\ortogonal})$ is a complete cotorsion pair. Of course the completeness of such pair implies that  $\GP$  is special precovering. Actually is  know by M. Cort\'es-Izurdiaga, J. Šaroch \cite{Cortes} that the pair $(\GP(R) , \GP (R)^{\ortogonal})$ is always an hereditary cotorsion pair, a condition to be complete is that all projective modules are $\lambda$-pure-injective for some infinite regular cardinal $\lambda$, but the conditions to be complete still open. In this section, we provide conditions that do not involve cardinal numbers. 

To accomplish this, and following \cite{Guan} we say that an $R$-module $E$ is \textbf{strongly FP-injective} if the group $\Ext^{k} _R (F,E)$ is trivial for all $k \geq 1$ and all finitely presented $R$-module $F$. We denote by $\mathcal{SFI}$ to the class of all  strongly $FP$-injectives. Since $(^{\ortogonal}\mathcal{SFI} , \mathcal{SFI})$ is an hereditary and complete cotorsion pair \cite[Theorem 3.4]{Guan}, it is routine to check that $\coresdim_{\mathcal{SFI}}(M) \leq m$ if and only if $\Ext ^{i+m} _R (F, M) =0$ for all finitely presented $F$ and $i \geq 1$. For convenience we denote  such dimension by $\mathrm{SFI\mbox{-}id} (M)$.   Is also clear that $\mathrm{SFI\mbox{-}id} (M) \leq \id (M)$. Under these conditions, we have the following result. 
 
\begin{lem} \label{Hom-exacto}
Let $R$ be a ring and $\Le$ a class of left $R$-modules. An acyclic complex  of projective $R$-modules $X ^{\bullet}$ is  $\Hom _R (-,\Le)$ exact for every subclass $\Le \subseteq \mathcal{SFI} ^{\cogorro}$. In particular, if $\Proj (R) \subseteq \mathcal{SFI} ^{\cogorro}$ then every acyclic complex  of projective $R$-modules is $\Hom _R (-,\Proj (R))$-exact.
\end{lem}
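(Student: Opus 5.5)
The plan is to reduce the statement to the case $\Le \subseteq \mathcal{SFI}$ by dimension shifting, and then to prove that case by identifying the cycles of an acyclic complex of projectives as members of the left class ${}^{\ortogonal}\mathcal{SFI}$ of Guan's cotorsion pair.

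Let $X^{\bullet}=(X_n,d_n)$ be an acyclic complex of projectives with cycles $Z_n=\Ker(d_n)$. For each $n$ there is a short exact sequence
\[
0\to Z_{n+1}\to X_{n+1}\to Z_n\to 0 .
\]
Since the $X_n$ are projective, $\Hom_R(X^{\bullet},L)$ is exact for a module $L$ exactly when $\Ext^1_R(Z_n,L)=0$ for every $n\in\mathbb{Z}$. So it suffices to prove $\Ext^1_R(Z_n,L)=0$ for all $n$ and all $L\in\mathcal{SFI}^{\cogorro}$.

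\textbf{Reduction.} Let $L\in\mathcal{SFI}^{\cogorro}$, and choose a finite $\mathcal{SFI}$-coresolution
\[
0\to L\to E^0\to E^1\to\cdots\to E^m\to 0 .
\]
Split it into short exact sequences $0\to K^{j}\to E^{j}\to K^{j+1}\to 0$ with $K^0=L$ and $K^m=E^m$. Applying $\Hom_R(Z_n,-)$ and dimension shifting gives
\[
\Ext^i_R(Z_n,K^{j})\cong \Ext^{i-1}_R(Z_n,K^{j+1})\quad (i\geq 2),
\]
provided $\Ext^{\geq 1}_R(Z_n,\mathcal{SFI})=0$. There is a second way to see this, and it also handles $i=1$. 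The syzygy sequences give $\Ext^{1}_R(Z_n,L)\cong \Ext^{1+m}_R(Z_{n-m},L)$, and the latter is reached by the shifts above from $\Ext^1_R(Z_{n-m},E^m)=0$. Either way, the whole statement reduces to
\[
(\ast)\qquad \Ext^{\geq 1}_R(Z,E)=0 \ \text{ for every cycle } Z \text{ of } X^{\bullet} \text{ and every } E\in\mathcal{SFI}.
\]
Since the $Z_{n-k}$ are cycles too, $(\ast)$ is needed only for $\Ext^1$. The "in particular" statement then follows by taking $\Le=\Proj(R)$.

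\textbf{The core step $(\ast)$.} Because $({}^{\ortogonal}\mathcal{SFI},\mathcal{SFI})$ is a hereditary complete cotorsion pair \cite[Theorem 3.4]{Guan}, $(\ast)$ amounts to showing that every cycle of an acyclic complex of projectives lies in ${}^{\ortogonal}\mathcal{SFI}$. This class is resolving and closed under filtrations. It is generated as a left class by the finitely presented modules together with the projectives, so it is deconstructible.

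I would first try to realise $X^{\bullet}$ as a direct limit, or better a continuous filtration, of acyclic complexes of finitely generated projectives, in the spirit of Lazard/Neeman approximations of complexes of projectives. The cycles of such complexes are finitely presented, hence lie in ${}^{\ortogonal}\mathcal{SFI}$. An Eklof-type argument would then pass the vanishing of $\Ext^1_R(-,E)$ to the filtration, and hence to $Z_n$.

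If a filtration is not available, the alternative is to compute $\Ext$ in the category of complexes. Hom-exactness of $X^{\bullet}$ against $E$ is equivalent to $\Ext^1_{\Ch}(X^{\bullet},S^n(E))=0$ for the sphere complexes $S^n(E)$. I would then check this vanishing using the completeness of Guan's pair, lifted degreewise to complexes.

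\textbf{Main obstacle.} Step $(\ast)$ is the real difficulty. Dimension shifting alone only moves the problem from $L$ to some $E\in\mathcal{SFI}$, and an arbitrary module $Z_n$ need not be $\Ext$-orthogonal to $\mathcal{SFI}$. The argument must therefore genuinely use that $Z_n$ is a cycle of an acyclic complex of projectives, through the filtration or direct-limit approximation. The key point to verify is that the approximating subcomplexes can be chosen acyclic with finitely presented cycles. If they cannot, the argument needs a finer property of $\mathcal{SFI}$, such as closure under the relevant pure or direct-limit constructions.
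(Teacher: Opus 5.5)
\textbf{Verdict: there is a genuine gap.} The reduction is correct and matches the paper, but the core step $(\ast)$ is never proved, and neither suggested route proves it.

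Your reduction agrees with the paper's. Exactness of $\Hom_R(X^{\bullet},L)$ is equivalent to $\Ext^1_R(Z_n(X^{\bullet}),L)=0$ for all $n$; the paper gets this from \cite[Lemma 3.10 (b)]{BMS}. Your dimension shift from $\mathcal{SFI}^{\cogorro}$ down to $\mathcal{SFI}$ is also correct, since syzygies of cycles are again cycles.

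The paper does not prove $(\ast)$ either. It imports it from Emmanouil--Kaperonis \cite[Corollary 4.9 (ii)]{Emma24}, which gives $\Ext^1_R(Z_n(X^{\bullet}),L)=0$ directly for every $L\in\mathcal{SFI}^{\cogorro}$. So the paper does not even need your reduction to $\mathcal{SFI}$. Without that theorem, or a proof of it, your argument is incomplete.

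\textbf{The direct-limit version fails on general grounds.} ${}^{\ortogonal}\mathcal{SFI}$ contains all finitely presented modules. If it were closed under direct limits, it would be all of $\Modu(R)$, forcing $\mathcal{SFI}=\Inj(R)$. That fails, for example, over any coherent non-Noetherian ring, where $\mathcal{SFI}$ is the class of FP-injective modules.

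\textbf{The filtration version needs an input you do not supply.} You need a continuous filtration of $X^{\bullet}$ whose factors are acyclic complexes of finitely generated projectives. Nothing in the proposal produces one, and it is not a standard fact. Kaplansky-type deconstructibility arguments give factors of bounded size, whose cycles need not be finitely presented. Take a countably presented cycle $Z=\varinjlim F_k$ with each $F_k$ finitely presented. Then $\Ext^1_R(Z,E)\cong\varprojlim^1\Hom_R(F_k,E)$. Showing this Mittag-Leffler obstruction vanishes, using that $Z$ sits in an acyclic complex of projectives, is the real content, and Eklof's lemma says nothing about it.

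\textbf{The route through $\Ch(R)$ is circular.} The vanishing $\Ext^1_{\Ch}(X^{\bullet},S^n(E))=0$ merely restates that $\Hom_R(X^{\bullet},E)$ is exact. Lifting Guan's pair to $\Ch(R)$ gives this vanishing only once you know $X^{\bullet}$ lies in the relevant left class, for instance that its cycles lie in ${}^{\ortogonal}\mathcal{SFI}$. That is $(\ast)$ again.
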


\begin{proof}
We will make use of  \cite[Lemma 3.10 (b)]{BMS}, from where is enough that $\Ext _R ^{1} (Z_n (X ^{\bullet}), L) =0$ for all $n \in \mathbb{Z}$ and $L \in \Le$. Indeed, from \cite[Corollary 4.9 (ii)]{Emma24} we know  that  $\Ext _R ^{1} (Z_n (X ^{\bullet}), L) =0$ for all $n \in \mathbb{Z}$ and $L \in \mathcal{SFI} ^{\cogorro}$.
\end{proof}

It was recently proven by I. Emmanouil, O. Talelli  \cite[Proposition 2.1]{Emma25} that for each  $n \in \mathbb{N}$ the pair $(^{\ortogonal}( \mathcal{SFI} ^{\cogorro} _n), \mathcal{SFI} ^{\cogorro }_n)$ is an hereditary and complete cotorsion pair (in fact generated by a set). We will use such fact in the following result. Recall from J. Gillespie \cite{Gill16} that a \textbf{projective cotorsion pair} in $\Modu (R)$ is a complete cotorsion pair $(\F , \W) $ such that $\W$ is \textit{thick} \cite[Definition 2.5]{Gill16} and $\F \cap \W = \Proj (R)$. 
\begin{teo}\label{Gill-Advances}
If for some $n \in \mathbb{N}$ the hereditary and complete cotorsion pair $(^{\ortogonal}( \mathcal{SFI} ^{\cogorro} _n), \mathcal{SFI} ^{\cogorro }_n)$ is projective. Then, the hereditary cotorsion pair $(\GP , \GP ^{\ortogonal})$ is complete, furthermore they coincide.
\end{teo}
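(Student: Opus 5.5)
We write $\mathcal{C}:={}^{\ortogonal}(\mathcal{SFI}^{\cogorro}_n)$ and $\W := \mathcal{SFI}^{\cogorro}_n$. The plan is to prove $\mathcal{C} = \GP$. Once that equality holds, $(\GP,\GP^{\ortogonal}) = (\mathcal{C},\W)$, so $\GP^{\ortogonal}=\W$, and completeness is inherited from \cite[Proposition 2.1]{Emma25}. The projectivity hypothesis gives us two facts: $\mathcal{C}\cap\W=\Proj(R)$ and $\W$ thick. Since $\W$ is coresolving and thick, it contains $\Proj(R)$, because $\Proj(R)=\mathcal{C}\cap \W\subseteq \W$. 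In particular $\Proj(R)\subseteq \mathcal{SFI}^{\cogorro}$.

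\emph{Inclusion $\GP\subseteq \mathcal{C}$.} Let $M=Z_0(X^\bullet)$ for a totally acyclic complex of projectives $X^\bullet$. I would apply Lemma \ref{Hom-exacto} with $\Le=\W\subseteq \mathcal{SFI}^{\cogorro}$, which gives $\Ext^1_R(Z_k(X^\bullet),W)=0$ for all $k$ and all $W\in\W$. Dimension shifting along the projective resolution of $M$ coming from $X^\bullet$ then yields $\Ext^{i}_R(M,W)\cong\Ext^1_R(Z_{i-1}(X^\bullet),W)=0$ for every $i\ge1$. Hence $M\in{}^{\ortogonal}\W=\mathcal{C}$. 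This step is essentially immediate.

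\emph{Inclusion $\mathcal{C}\subseteq \GP$.} This is the main step. Take $C\in\mathcal{C}$.
\begin{enumerate}
\item Build the right half of a complete resolution. Completeness of $(\mathcal{C},\W)$ gives $0\to C\to W^0\to C^1\to0$ with $W^0\in\W$ and $C^1\in\mathcal{C}$. Because $\mathcal{C}$ is closed under extensions, $W^0\in\mathcal{C}\cap\W=\Proj(R)$. Iterating produces an exact sequence $0\to C\to P^0\to P^1\to\cdots$ with all $P^i$ projective and all cokernels in $\mathcal{C}$.
\item Splice this with a projective resolution of $C$. All syzygies lie in $\mathcal{C}$, since $\mathcal{C}$ is resolving ($(\mathcal{C},\W)$ being hereditary). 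This gives an acyclic complex of projectives whose cycles all lie in $\mathcal{C}$.
\item Check total acyclicity. The complex is $\Hom_R(-,P)$-exact for every $P\in\Proj(R)$, because $\Proj(R)\subseteq\W$ and all cycles lie in ${}^{\ortogonal}\W$. Alternatively, this follows directly from the ``in particular'' part of Lemma \ref{Hom-exacto}. Hence $C\in\GP$.
\end{enumerate}

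Thickness of $\W$ is used only through the observation that projectives lie in $\W$. This is precisely what is needed to apply Lemma \ref{Hom-exacto} in step 3. The delicate point I expect is step 1: one must ensure that the middle term of the special preenvelope lands in $\mathcal{C}\cap\W$, which uses exactly $\mathcal{C}\cap\W=\Proj(R)$.

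Combining the two inclusions gives $\GP=\mathcal{C}$. Therefore $\GP^{\ortogonal}=\mathcal{C}^{\ortogonal}=\W$, and the pair $(\GP,\GP^{\ortogonal})$ coincides with the complete cotorsion pair $(\mathcal{C},\W)$.
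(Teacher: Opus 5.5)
Your proof is correct and has the same two-inclusion structure as the paper's. The inclusion $\GP\subseteq{}^{\ortogonal}(\mathcal{SFI}^{\cogorro}_n)$ rests on the same Emmanouil--Kaperonis vanishing result: you reach it through Lemma \ref{Hom-exacto} plus dimension shifting, while the paper cites \cite[Remarks 4.10 (i)]{Emma24} directly. For the reverse inclusion, your construction of a complete resolution from iterated special $\W$-preenvelopes is exactly the argument behind the paper's citation of \cite[Theorem 5.4]{Gill17}. One small correction: thickness of $\W$ is not used in your argument, because $\Proj(R)\subseteq\W$ already follows from $\mathcal{C}\cap\W=\Proj(R)$.
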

\begin{proof}
It follows  from  \cite[Remarks 4.10 (i)]{Emma24}   that  $\GP \subseteq {^{\ortogonal} \mathcal{SFI} ^{\cogorro }}$. Thus, if $G \in \GP$ then $\Ext^{1}  _R(G, L) =0 $ for all $L \in \mathcal{SFI} ^{\cogorro} _n$, which implies $G \in {^{\ortogonal} (\mathcal{SFI} ^{\cogorro} _n )}$. Now, since $(^{\ortogonal}( \mathcal{SFI} ^{\cogorro} _n), \mathcal{SFI} ^{\cogorro }_n)$ is projective, we obtain from \cite[Theorem 5.4]{Gill17} that $^{\ortogonal}( \mathcal{SFI} ^{\cogorro} _n) \subseteq \GP$.
\end{proof}

Note that in the conditions of  Theorem \ref{Gill-Advances}  we obtain that the right side $\GP ^{\ortogonal} = \mathcal{SFI} ^{\cogorro} _n$, which provides an answer to the description requested by M. Cort\'es-Izurdiaga, J. Šaroch \cite{Cortes}. In particular, we also obtain that $\GP$ is special precovering.

  In the following result, we will use the \textbf{finitistic strongly FP-injective dimension of $R$ }defined by 
  $$\mathrm{fin.}\mathcal{SFI} (R) : =  \sup \{\mathrm{SFI.id} (M ): M \in  \mathcal{SFI} ^{\cogorro } \}.$$
  We have the following characterization of projectivity. 
\begin{teo} \label{Caracteriza-Proyectivo}
Given $n \in \mathbb{N}$. The cotorsion pair $(^{\ortogonal}( \mathcal{SFI} ^{\cogorro} _n), \mathcal{SFI} ^{\cogorro }_n)$ is projective, if and only if  $\Proj (R) \subseteq \mathcal{SFI} ^{\cogorro}$ and $\mathrm{fin.}\mathcal{SFI} (R) \leq n$. 
\end{teo}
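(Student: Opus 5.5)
The definition of a projective cotorsion pair has two conditions beyond completeness, which we already have by \cite[Proposition 2.1]{Emma25}. Write $\W := \mathcal{SFI}^{\cogorro}_n$ and $\F := {}^{\ortogonal}\W$. We must show that $\W$ is thick and that $\F\cap\W = \Proj(R)$, and we will compare each condition with the two hypotheses on the right-hand side.

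\emph{($\Rightarrow$).} Suppose first that the pair is projective. Then $\Proj(R) = \F\cap\W \subseteq \W \subseteq \mathcal{SFI}^{\cogorro}$, which gives the first condition. For the bound $\mathrm{fin.}\mathcal{SFI}(R)\le n$, take $M\in\mathcal{SFI}^{\cogorro}$, say $\mathrm{SFI\mbox{-}id}(M)=m$, so that $M$ has an $\mathcal{SFI}$-coresolution of length $m$. We have $\mathcal{SFI}=\mathcal{SFI}^{\cogorro}_0\subseteq \W$, and $\W$ is thick, hence closed under cokernels of monomorphisms. Splitting the coresolution into short exact sequences and working backwards from the last term shows, by induction, that every cosyzygy lies in $\W$, and in particular $M\in\W$. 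Therefore $\mathrm{SFI\mbox{-}id}(M)\le n$.

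\emph{($\Leftarrow$).} Conversely, assume the two conditions. Then $\mathcal{SFI}^{\cogorro} = \W$, since any $M$ of finite SFI-dimension has dimension at most $n$. To see that $\W$ is thick, use the Ext characterization: $M\in\W$ if and only if $\Ext^{i+n}_R(F,M)=0$ for all finitely presented $F$ and all $i\ge1$. Closure under direct summands and extensions is immediate. For a short exact sequence $0\to A\to B\to C\to0$, the long exact Ext sequence shows that if two of the terms have finite SFI-dimension then so does the third, with dimension at most $n+1$. By the equality above, finite dimension forces dimension at most $n$, so the third term lies in $\W$. This is the point where the hypothesis $\mathrm{fin.}\mathcal{SFI}(R)\le n$ is essential.

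It remains to show $\F\cap\W=\Proj(R)$. The inclusion $\supseteq$ holds because $\Proj(R)\subseteq\mathcal{SFI}^{\cogorro}=\W$, and projectives lie in every left orthogonal class. For $\subseteq$, take $X\in\F\cap\W$ and choose a projective presentation $0\to K\to P\to X\to0$. Since $P\in\W$, $X\in\W$ and $\W$ is thick, we get $K\in\W$. Then $\Ext^1_R(X,K)=0$ because $X\in\F$, so the sequence splits and $X$ is projective.

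I expect the thickness argument to be the main subtlety. One has to be careful that the kernel case of the two-out-of-three property genuinely raises the bound only to $n+1$ while keeping the dimension finite, so that the finitistic hypothesis can bring it back down to $n$.
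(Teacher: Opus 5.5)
Your proof is correct. Your $(\Leftarrow)$ direction is essentially the paper's own. The paper also splits a projective presentation $0\to K\to P\to X\to 0$ after using $\mathrm{fin.}\mathcal{SFI}(R)\le n$ to bring $K$ back into $\mathcal{SFI}^{\cogorro}_n$. It only remarks that ``a similar argument'' gives thickness, which you spell out. For $\Proj(R)\subseteq{}^{\ortogonal}(\mathcal{SFI}^{\cogorro}_n)$ the paper passes through $\Proj(R)\subseteq\GP$, whereas your observation is more direct.

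Your $(\Rightarrow)$ direction takes a genuinely different route. The paper first invokes Theorem~\ref{Gill-Advances}, and hence \cite[Theorem 5.4]{Gill17}, to get $\GP={}^{\ortogonal}(\mathcal{SFI}^{\cogorro}_n)$. It then uses \cite[Corollary 4.9 (ii)]{Emma24}, i.e.\ $\Ext^1_R(\GP,\mathcal{SFI}^{\cogorro})=0$, to conclude $\mathcal{SFI}^{\cogorro}\subseteq\GP^{\ortogonal_1}=\mathcal{SFI}^{\cogorro}_n$. You use only the thickness axiom together with $\mathcal{SFI}\subseteq\mathcal{SFI}^{\cogorro}_n$, reading a finite $\mathcal{SFI}$-coresolution backwards. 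This is elementary and self-contained. Combined with your $(\Leftarrow)$ argument, it also exposes the structure of the statement:
\begin{itemize}
\item thickness of $\mathcal{SFI}^{\cogorro}_n$ alone is equivalent to $\mathrm{fin.}\mathcal{SFI}(R)\le n$;
\item once that holds, $\F\cap\W=\Proj(R)$ is equivalent to $\Proj(R)\subseteq\mathcal{SFI}^{\cogorro}$.
\end{itemize}
The paper's route is shorter given its earlier results and keeps the link with $\GP$, but it rests on the external inputs from \cite{Gill17} and \cite{Emma24}.

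One wording slip needs fixing. For a coresolution $0\to M\to E^0\to\cdots\to E^m\to 0$, the first backward step is $\Ker(E^{m-1}\to E^m)\in\W$. So what you actually use is closure under kernels of epimorphisms, not cokernels of monomorphisms. The two-out-of-three property in Gillespie's definition of thick covers this. It is also precisely the nontrivial part here, since $\mathcal{SFI}^{\cogorro}_n$ is always closed under extensions, direct summands and cokernels of monomorphisms.
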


\begin{proof}
$\Rightarrow)$  Assume that $(^{\ortogonal}( \mathcal{SFI} ^{\cogorro} _n), \mathcal{SFI} ^{\cogorro }_n)$ is a projective cotorsion pair. By definition we know  $\Proj (R) \subseteq  {\mathcal{SFI} ^{\cogorro} _n}$.  Consider $N \in  \mathcal{SFI} ^{\cogorro} $, we only need to prove that $N \in  \mathcal{SFI} ^{\cogorro} _n$. From \cite[Corollary 4.9 (ii)]{Emma24} we know that $\Ext ^1 _R (G, N) =0$ for every $G \in \GP $, but from Theorem  \ref{Gill-Advances} we know that $\GP = {^{\ortogonal} \mathcal{SFI} ^{\cogorro} _n}$, that is $N \in { \mathcal{SFI} ^{\cogorro} _n}$.

$\Leftarrow)$  From $\Proj (R) \subseteq \mathcal{SFI} ^{\cogorro}$ and $\mathrm{fin.}\mathcal{SFI} (R) \leq n$ we obtain that $\Proj (R) \subseteq \mathcal{SFI } ^{\cogorro} _n$, also we know that $\Proj (R) \subseteq \GP \subseteq {^{\ortogonal} (\mathcal{SFI } ^{\cogorro} _n}) $. This implies that $$\Proj (R) \subseteq  {^{\ortogonal} (\mathcal{SFI } ^{\cogorro} _n})  \cap \mathcal{SFI} ^{\cogorro} _n.$$
Now take $M \in {^{\ortogonal} (\mathcal{SFI } ^{\cogorro} _n})  \cap \mathcal{SFI} ^{\cogorro} _n$. We can  consider an exact sequence $ \lambda : 0 \to K \to P \to M \to 0$ with $P \in \Proj (R)$. Since $P, M \in \mathcal{SFI } ^{\cogorro} _n$ we get $K \in \mathcal{SFI } ^{\cogorro}$, and  the condition $\mathrm{fin.}\mathcal{SFI} (R) \leq n$ implies that $K \in \mathcal{SFI } ^{\cogorro} _n = \mathcal{SFI } ^{\cogorro} $ (a similar argument proves that  $\mathcal{SFI } ^{\cogorro} _n$ is thick). Since $M \in  {^{\ortogonal}( \mathcal{SFI } ^{\cogorro} _n)}$  we obtain that $\lambda$ splits, thus $M | P$ and then $M \in \Proj (R)$.
\end{proof}
As direct consequence we obtain the following.
\begin{cor}
If  $\Proj (R) \subseteq \mathcal{SFI} ^{\cogorro}$ and $\mathrm{fin.}\mathcal{SFI} (R) \leq n$, then $(\GP , \GP ^{\ortogonal})$ is an hereditary and complete cotorsion pair. 
\end{cor}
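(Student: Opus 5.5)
This is a direct combination of Theorem \ref{Caracteriza-Proyectivo} and Theorem \ref{Gill-Advances}; no new homological input is needed.

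\emph{Step 1: the pair is projective.} Fix $n$ with $\Proj (R) \subseteq \mathcal{SFI}^{\cogorro}$ and $\mathrm{fin.}\mathcal{SFI}(R) \leq n$. By \cite[Proposition 2.1]{Emma25}, $(^{\ortogonal}( \mathcal{SFI}^{\cogorro}_n), \mathcal{SFI}^{\cogorro}_n)$ is a hereditary and complete cotorsion pair. The implication $\Leftarrow$ of Theorem \ref{Caracteriza-Proyectivo} then shows that this pair is projective in Gillespie's sense. Concretely, that argument gives two things:
\begin{itemize}
\item $^{\ortogonal}( \mathcal{SFI}^{\cogorro}_n) \cap \mathcal{SFI}^{\cogorro}_n = \Proj(R)$;
\item $\mathcal{SFI}^{\cogorro}_n = \mathcal{SFI}^{\cogorro}$ is thick, since the hypothesis $\mathrm{fin.}\mathcal{SFI}(R)\le n$ collapses the bounded class to the finite one.
\end{itemize}
I would point back to that parenthetical remark rather than reprove thickness. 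If a check seemed needed, it is routine: closure under extensions, kernels of epimorphisms and cokernels of monomorphisms for finite $\mathcal{SFI}$-coresolution dimension follows from the long exact $\Ext(F,-)$ sequence with $F$ finitely presented, and direct summands are clear.

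\emph{Step 2: identify $\GP$.} Apply Theorem \ref{Gill-Advances} to this $n$. Its proof gives $\GP \subseteq {^{\ortogonal}(\mathcal{SFI}^{\cogorro}_n)}$ from \cite[Remarks 4.10 (i)]{Emma24}, and the reverse inclusion from \cite[Theorem 5.4]{Gill17}. Hence $\GP = {^{\ortogonal}(\mathcal{SFI}^{\cogorro}_n)}$.

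\emph{Step 3: conclude.} Taking right orthogonals gives $\GP^{\ortogonal} = \mathcal{SFI}^{\cogorro}_n$, because the pair is a cotorsion pair. So $(\GP, \GP^{\ortogonal})$ coincides with a hereditary complete cotorsion pair, and is therefore itself hereditary and complete. This is consistent with the general fact from \cite{Cortes} that $(\GP,\GP^{\ortogonal})$ is always hereditary.

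The only point needing care is Step 1, namely that the hypotheses really deliver Gillespie's projectivity, in particular thickness of $\mathcal{SFI}^{\cogorro}_n$. Since the paper has already established this, the corollary is essentially immediate.
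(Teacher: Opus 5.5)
Your proposal is correct and follows the paper's route. The paper presents this corollary as a direct consequence of Theorem \ref{Caracteriza-Proyectivo} ($\Leftarrow$), which gives projectivity of $(^{\ortogonal}(\mathcal{SFI}^{\cogorro}_n), \mathcal{SFI}^{\cogorro}_n)$, and Theorem \ref{Gill-Advances}, which identifies $\GP$ with $^{\ortogonal}(\mathcal{SFI}^{\cogorro}_n)$ and hence shows $(\GP,\GP^{\ortogonal})$ is hereditary and complete; your routine check that $\mathcal{SFI}^{\cogorro}$ is thick is a valid supplement.
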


As can be seen noted, the condition   $\Proj (R) \subseteq \mathcal{SFI} ^{\cogorro}$ is satisfied in particular  for rings with dimension $\mathrm{silp} (R)$ (supremum  of the injective lengths of projectives, or $\id (\Proj (R))$) finite, since we have the inequality   $\mathrm{SFI.id} (M )\leq \id (M)$, for all $M \in \Modu (R)$. 
\begin{cor}
Let $R$ be a ring and consider the following statements;
\begin{itemize}
\item[(i)] $R$ is a left perfect ring with  $\Proj (R) \subseteq \mathcal{SFI} ^{\cogorro} $ and $\mathrm{fin.}\mathcal{SFI} (R) \leq n$,
\item[(ii)] $^{\ortogonal}( \mathcal{SFI} ^{\cogorro} _n)$ is closed under direct limits, 
\item[(iii)] the cotorsion pair $(^{\ortogonal}( \mathcal{SFI} ^{\cogorro} _n), \mathcal{SFI} ^{\cogorro }_n)$ is perfect.
\end{itemize}
Then $\mathrm{(i)} \Rightarrow \mathrm{(ii)} \Rightarrow \mathrm{(iii)}$. Furthermore, if  $\Flat (R) \subseteq \mathcal{SFI} ^{\cogorro} $ and $\mathrm{fin.}\mathcal{SFI} (R) \leq n$. 
Then, $\mathrm{(iii)} \Rightarrow \mathrm{(i)} $, and thus all statements are equivalent.
\end{cor}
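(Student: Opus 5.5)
Under (i), Theorem \ref{Caracteriza-Proyectivo} makes the cotorsion pair $(^{\ortogonal}( \mathcal{SFI} ^{\cogorro} _n), \mathcal{SFI} ^{\cogorro }_n)$ projective, and Theorem \ref{Gill-Advances} then gives $^{\ortogonal}( \mathcal{SFI} ^{\cogorro} _n) = \GP$. So for (i) $\Rightarrow$ (ii) it suffices to show that $\GP$ is closed under direct limits when $R$ is left perfect. The plan is to show first that over a left perfect ring every flat module is projective, so $\Flat(R) = \Proj(R) \subseteq \mathcal{SFI}^{\cogorro}_n$. Next, every $G \in \GP$ is Gorenstein flat. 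Indeed, for an acyclic complex of projectives $P^\bullet$ that is $\Hom_R(-,\Proj)$-exact, we need $I \otimes_R P^\bullet$ exact for injective right modules $I$. Writing $I^+ = \Hom_{\mathbb{Z}}(I,\mathbb{Q}/\mathbb{Z})$, which is a flat (hence projective) left module, exactness of $\Hom_R(P^\bullet, I^+) \cong (I\otimes_R P^\bullet)^+$ gives the claim. I would avoid this character-module detail by instead quoting the standard fact that over a left perfect ring $\GF = \GP$; I expect the writeup to cite this rather than reprove it. Since $\GF$ is closed under direct limits (Šaroch--Šťovíček), so is $\GP = {^{\ortogonal}(\mathcal{SFI}^{\cogorro}_n)}$.

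For (ii) $\Rightarrow$ (iii), the pair is complete by \cite[Proposition 2.1]{Emma25}. A complete cotorsion pair whose left class is closed under direct limits is perfect, by the Enochs / Xu theorem (e.g.\ Göbel--Trlifaj). This step is just a citation.

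For (iii) $\Rightarrow$ (i) under the extra hypotheses, Theorem \ref{Caracteriza-Proyectivo} shows the pair is projective, since $\Proj \subseteq \Flat \subseteq \mathcal{SFI}^{\cogorro}$ and $\mathrm{fin.}\mathcal{SFI}(R) \le n$. Then $\mathcal{A} := {^{\ortogonal}(\mathcal{SFI}^{\cogorro}_n)}$ is covering, and $\mathcal{A} \cap \mathcal{SFI}^{\cogorro}_n = \Proj(R)$. We must show $R$ is left perfect, for which Bass' theorem requires every flat module to be projective. Take $F$ flat. By the hypothesis and $\mathrm{fin.}\mathcal{SFI}(R)\le n$, we get $F \in \mathcal{SFI}^{\cogorro}_n$. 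Write $F = \varinjlim P_i$ with $P_i$ finitely generated free. The main obstacle is showing $F \in \mathcal{A}$. My first attempt: since the cotorsion pair is perfect, $\mathcal{A}$ is covering, and a covering class containing $\Proj$ in a hereditary complete pair generated by a set is closed under direct limits (Enochs' converse, using that $\mathcal{A}$ is closed under direct sums and the kernel of the cover lies in $\mathcal{SFI}^{\cogorro}_n$). Then $F \in \mathcal{A} \cap \mathcal{SFI}^{\cogorro}_n = \Proj(R)$, so flat modules are projective and $R$ is left perfect. The remaining conditions of (i) are the hypotheses, so all three statements are equivalent.
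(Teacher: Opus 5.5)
\textbf{(i)$\Rightarrow$(ii).} Your (ii)$\Rightarrow$(iii) matches the paper, but (i)$\Rightarrow$(ii) has a genuine gap. Everything rests on $\GP\subseteq\GF$, and neither of your justifications works.

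Your character-module argument needs $I^{+}$ to be flat for every injective right module $I$. That holds only when $R$ is right coherent, and left perfect rings need not be: the commutative local ring $k[x_1,x_2,\dots]/(x_1,x_2,\dots)^2$ is perfect but not coherent.

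The fallback ``standard fact'' $\GF=\GP$ over left perfect rings is standard only in one direction. Since flat $=$ projective, every Gorenstein flat module is projectively coresolved Gorenstein flat, hence Gorenstein projective. The reverse inclusion $\GP\subseteq\GF$ needs extra hypotheses, such as right coherence or finite flat dimension of injective right modules.

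Concretely, under (i) you have $\GP^{\ortogonal}=\mathcal{SFI}^{\cogorro}_n$. Moreover $\Ext^1_R(G,I^{+})\cong\Tor_1^R(I,G)^{+}$, and $I^{+}\in\mathcal{SFI}^{\cogorro}_n$ if and only if $\fd(I)\le n$. So $\GP\subseteq\GF$ is equivalent to every injective right module having flat dimension at most $n$, which is not part of (i). The appeal to closure of $\GF$ under direct limits therefore does not go through.

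The paper never passes through $\GF$. It takes direct limits of projective coresolutions $0\to G_j\to P^0_j\to P^1_j\to\cdots$ and uses that $\varinjlim P^i_j$ is projective over a left perfect ring. It then applies Lemma~\ref{Hom-exacto}, which is where $\Proj(R)\subseteq\mathcal{SFI}^{\cogorro}$ genuinely enters, to see that the resulting acyclic complex of projectives is $\Hom_R(-,\Proj(R))$-exact. Hence $\varinjlim G_j\in\GP$. One has to choose the coresolutions compatibly with the transition maps, a point the paper leaves implicit.

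\textbf{(iii)$\Rightarrow$(i).} Your reduction to ``every flat module is projective'' and your final step $F\in\mathcal{A}\cap\mathcal{SFI}^{\cogorro}_n=\Proj(R)$ are fine. The problem is the claim that the covering class $\mathcal{A}$ is closed under direct limits. That is exactly Enochs' Conjecture. It does not follow from closure under direct sums plus Wakamatsu's lemma, and at best it would require a deep external theorem.

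You do not need it: apply the cover to $F$ itself, as the paper does.
\begin{enumerate}
\item Let $0\to K\to G\to F\to 0$ be an $\mathcal{A}$-cover. By Wakamatsu, $K\in\mathcal{SFI}^{\cogorro}_n$.
\item Since also $F\in\mathcal{SFI}^{\cogorro}_n$, we get $G\in\mathcal{A}\cap\mathcal{SFI}^{\cogorro}_n=\Proj(R)$.
\item A right-minimal epimorphism from a projective module has superfluous kernel. Indeed, if $L+K=G$, lift $G\to F$ through the epimorphism $L\to F$ and use minimality. So $G\to F$ is a projective cover.
\item Every flat module thus has a projective cover, and Bass' theorem then forces $R$ to be left perfect.
\end{enumerate}
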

\begin{proof}
$\mathrm{(i)} \Rightarrow \mathrm{(ii)}$ Since $\Proj (R) \subseteq \mathcal{SFI} ^{\cogorro} $ and $\mathrm{fin.}\mathcal{SFI} (R) \leq n$, from Theorems \ref{Caracteriza-Proyectivo} and \ref{Gill-Advances} we know that  $\GP = {^{\ortogonal}( \mathcal{SFI} ^{\cogorro} _n)}$.  Thus, consider $\{G_j\} _{j \in J}$ a family of Gorenstein projectives. For each $j \in J$ there exists an exact sequence $0 \to G_j \to P_j ^{0} \to P_j ^{1} \to \cdots $ with $P_j ^{i} \in \Proj (R)$, from where we obtain the exact sequence 
$$0 \to \displaystyle {\lim _{\to}} G_j \to {\lim _{\to}} P_j ^{0}  \to {\lim _{\to}} P_j ^{1} \to \cdots . $$
Since  $R$ is perfect, then  $ \displaystyle {\lim _{\to}} P_j ^{i} \in \Proj (R)$.  Finally, the condition $\Proj (R) \subseteq \mathcal{SFI} ^{\cogorro} $ and Lemma \ref{Hom-exacto} implies $\displaystyle {\lim _{\to}} G_j  \in \GP = {^{\ortogonal}( \mathcal{SFI} ^{\cogorro} _n)}$.

$\mathrm{(ii)} \Rightarrow \mathrm{(iii)}$ Follows directly the application of \cite[Theorem 7.2.6]{EnJen00} on $(^{\ortogonal}( \mathcal{SFI} ^{\cogorro} _n), \mathcal{SFI} ^{\cogorro }_n)$. 

$\mathrm{(iii)} \Rightarrow \mathrm{(i)}$. Assume that $\Flat (R) \subseteq \mathcal{SFI} ^{\cogorro} $ and $\mathrm{fin.}\mathcal{SFI} (R) \leq n$. As before, from Theorems \ref{Caracteriza-Proyectivo} and   \ref{Gill-Advances} we know that  $\GP = {^{\ortogonal}( \mathcal{SFI} ^{\cogorro} _n)}$. Then, for $F \in \Flat (R)$ there exist a Gorenstein projective cover $0 \to K \to G \to F \to 0$ with $K \in \GP ^{\ortogonal} = \mathcal{SFI} ^{\cogorro} _n$. Since $F \in  \mathcal{SFI} ^{\cogorro} _n$,  then $G \in  \mathcal{SFI} ^{\cogorro} _n$. This implies that $G \in {^{\ortogonal} (\mathcal{SFI } ^{\cogorro} _n})  \cap \mathcal{SFI} ^{\cogorro} _n $. It follows from Theorem \ref{Caracteriza-Proyectivo} that  $G \in \Proj (R)$, i.e. $G \to F$ is a projective cover. Therefore, $R$ is left perfect.  
\end{proof}
It is  know that  invariants $\mathrm{silp} (R)$ and $\mathrm{spli}(R)$ (the sumpremum of the projective lengths of injectives, or $\pd (\Inj (R))$) are equal, if they are both finite. Nevertheless, as Gedrich
and Gruenberg point out in \cite{Gedrich}, it is not clear whether the finiteness of one of these implies the finiteness of the other, i.e. whether we always have an equality $\mathrm{silp} (R) = \mathrm{spli}(R)$. In what follows we will see that the rings on which  $\Proj (R) \subseteq \mathcal{SFI}^{\cogorro} $ is satisfied,  the condition $\mathrm{silp} (R)$ finite, implies  $\mathrm{spli}(R)$ finite, but before we need an additional tool. The notation $\mathrm{gl.GPD}(R)$ denotes the global Gorenstein projective dimension of $R$, given by $\sup \{\Gpd (M): M\in \Modu (R)\},$  in where $\Gpd (M) : = \resdim _{\GP} (M)$.

\begin{teo} \label{GlobalP}
Let $R$ be a ring such that $\Proj (R) \subseteq \mathcal{SFI}^{\cogorro} $ and $\pd (\Inj (R)) < \infty$. Then, $\mathrm{gl.GPD}(R) < \infty$,  in consequence $\GP$ is a special precovering class.
\end{teo}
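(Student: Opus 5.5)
The plan is to show that every module $M$ has Gorenstein projective dimension bounded by $n := \pd(\Inj(R))$. Once $\mathrm{gl.GPD}(R)<\infty$ is known, the last claim follows from the fact recalled in the introduction: every module of finite Gorenstein projective dimension has a special Gorenstein projective precover.

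\textbf{Step 1: set up.} Take a projective resolution of $M$,
$$\cdots \to P_1 \to P_0 \to M \to 0,$$
and let $K := \Omega^n M$ be the $n$-th syzygy. The goal is to show that $K \in \GP$. That gives a $\GP$-resolution of $M$ of length $n$, hence $\Gpd(M) \le n$.

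\textbf{Step 2: a coresolution of $K$ by projectives.} Take an injective coresolution
$$0 \to K \to I^0 \to I^1 \to \cdots$$
and replace each $I^j$ by projectives, using $\pd(I^j) \le n$. The cleanest route is dimension shifting. For every injective $I$, $\Ext^{\geq n+1}_R(I,-)=0$, so
$$\Ext^i_R(\Omega^n M, Q) \cong \Ext^{i+n}_R(M,Q).$$
The key point is to show that a module $K$ which is an $n$-th syzygy for a ring with $\mathrm{spli}$ finite embeds in a projective with cokernel again of the same type. Concretely, I would take an injective hull $K \hookrightarrow I$ and a projective resolution of $I$ of length $\le n$. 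Then I would use the standard argument (as in the proof that finite $\mathrm{spli}$ bounds Gorenstein dimensions): construct, via a pullback/horseshoe comparison of the projective resolutions of $I$ and of the $n$-th cosyzygy, a monomorphism $K \to P$ with $P$ projective whose cokernel is again an $n$-th syzygy. Iterating this produces an acyclic complex of projectives
$$\cdots \to P_{n+1} \to P_n \to P^0 \to P^1 \to \cdots$$
whose cycles include $K$.

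\textbf{Step 3: $\Hom$-exactness.} Lemma \ref{Hom-exacto}, combined with the hypothesis $\Proj(R) \subseteq \mathcal{SFI}^{\cogorro}$, shows that this acyclic complex of projectives is automatically $\Hom_R(-,\Proj(R))$-exact. Therefore $K \in \GP$.

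\textbf{Main obstacle.} The hard part is Step 2: producing the acyclic complex of projectives, that is, showing that $n$-th syzygies admit projective coresolutions with syzygy cokernels, using only $\mathrm{spli}<\infty$. If the direct construction is awkward, I would fall back on the following: the modules of finite projective dimension contain $\Inj(R)$, so every module has finite $\Proj^{\gorro}$-cores dimension "up to syzygy". I would then try to apply the known result (Emmanouil, Talelli) that when $\mathrm{spli}(R)<\infty$, every acyclic complex argument reduces to the existence of such complexes for syzygies. Lemma \ref{Hom-exacto} is what removes the usual need for $\mathrm{silp}$ finite.
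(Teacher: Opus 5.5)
Your outline (show that $\Omega^n M$ is a cycle of an acyclic complex of projectives, then invoke Lemma \ref{Hom-exacto}) is sound. However, Step 2 is the only substantive step, and it is never carried out; the concrete move you describe also does not give what you claim. Suppose you embed $K=\Omega^n M$ in its injective hull $E$ and apply the horseshoe lemma to $0\to K\to E\to E/K\to 0$. Then the projectivity of the $n$-th syzygy of $E$ gives a monomorphism from $\Omega^n K=\Omega^{2n}M$ into a projective, not from $K$.

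The fix is to embed $M$ itself. Let $0\to M\to I^0\to I^1\to\cdots$ be an injective coresolution with cosyzygies $M^0=M$ and $M^{k+1}=I^k/M^k$, and fix one projective resolution of each $M^k$ (the given one for $M^0$). Applying the horseshoe lemma to each $0\to M^k\to I^k\to M^{k+1}\to 0$ and taking $n$-th syzygies gives exact sequences $0\to\Omega^n M^k\to Q^k\to\Omega^n M^{k+1}\to 0$. Here $Q^k$ is the $n$-th syzygy of $I^k$ in a projective resolution, so it is projective because $\pd(I^k)\le n$. Splicing these sequences with the projective resolution of $\Omega^n M$ gives an acyclic complex of projectives having $\Omega^n M$ as a cycle. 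Your Step 3 then finishes the proof with $\Gpd(M)\le n$. Your fallback paragraph (finite coresolution dimension ``up to syzygy'', an unspecified reduction via Emmanouil--Talelli) is not an argument and should be discarded.

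With this repair your proof is correct and takes a different route from the paper's. The paper avoids the iteration by an Eilenberg-swindle-type trick. With $C_i$ the syzygies and $K_i$ the cosyzygies of $M$, the module $N=M\oplus\bigl(\bigoplus_i C_{i+1}\bigr)\oplus\bigl(\prod_i K_{i+1}\bigr)$ fits into a single exact sequence $0\to N\to P\oplus I\to N\to 0$, with $P$ projective and $I$ injective. One application of the horseshoe lemma gives $0\to L_m\to F\to L_m\to 0$ with $F$ projective. Splicing copies of this sequence yields a periodic acyclic complex of projectives, so $L_m\in\GP$ and $\Gpd(N)\le m$. The paper must then pass from $N$ to its direct summand $M$, which uses that the $\Gpd$ bound descends to direct summands (closure of $\GP$ under summands). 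Your version bounds $\Gpd(M)$ directly with no summand argument, at the cost of building the projective coresolution inductively. Both use Lemma \ref{Hom-exacto} in the same way and give the same bound $\mathrm{gl.GPD}(R)\le\pd(\Inj(R))$.
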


\begin{proof}
Take $M \in \Modu (R)$, and consider a projective and a injective resolution to $M$, 
$$ \cdots \to P_1 \to P_0 \to M \to 0, \;\;\mbox{ and } \;\;  0 \to M \to I_0 \to I_1 \to \cdots ,$$
for $P_{-1} = M = I_{-1}$ and  $i \geq 0 $ we can declare $C_i := \Ima (P_i \to P_{i-1} )$ and $K_i := \Ker (I_i \to I_{i+1})$ to obtain a decomposition of such resolutions in short exact sequences for each $i \geq 0$ as follows 
$$ 0 \to C_{i+1} \to P_i \to C_i \to 0  \;\;\mbox{ and } \;\; 0 \to K_i \to I_i \to K_{i+1} \to 0.$$
By doing the corpoduct of the first family and the product of the second family we get the exact sequences 
\[  0 \to \bigoplus _{i \in \mathbb{N}} C_{i+1} \to \bigoplus _{i \in \mathbb{N}} P_i \to  M \oplus (\bigoplus _{i \in \mathbb{N}} C_{i+1}) \to 0, \]

\[0 \to M \oplus (\prod _{i \in \mathbb{N}} K_{i+1} )\to \prod _{i \in \mathbb{N}} I_i \to   \prod _{i \in \mathbb{N}} K_{i+1} \to 0,
\]
and adding both of them we get the exact sequence 
$$0 \to M \oplus (C \oplus K) \to P \oplus I \to M \oplus (C \oplus K) \to 0,$$
with $C := \bigoplus _{i \in \mathbb{N}} C_{i+1}$, $K:= \prod _{i \in \mathbb{N}} K_{i+1} $, $P := \bigoplus _{i \in \mathbb{N}}  P_i$ and $I:= \prod _{i \in \mathbb{N}} I_i$. Take an exact sequence  $\cdots \to F_1 \to F_0 \to M\oplus (C \oplus K)$ by projectives $F_i$. Using Horseshoe's Lemma we obtain the following  commutative and exact diagram
$$\xymatrix{ L_{m_{}\;} \ar@{^{(}->}[d]  \ar@{^{(}->}[r]& F_{} \ar@{^{(}->}[d]  \ar@{>>}[r]& L_{m_{}\;} \ar@{^{(}->}[d]  \\
 F_{m-1} \ar[d]  \ar@{^{(}->}[r]& F_{m-1} \oplus F_{m-1} \ar[d]  \ar@{>>}[r]& F_{m-1} \ar[d]  \\
  \vdots \ar[d]  & \vdots \ar[d] & \vdots  \ar[d]  \\
  F_0 \ar@{>>}[d] \ar@{^{(}->}[r] & F_0 \oplus F_0 \ar@{>>}[r]\ar@{>>}[d] & F_0  \ar@{>>}[d] \\
 M\oplus (C \oplus K) \ar@{^{(}->} [r]& P \oplus I \ar@{>>}[r] & M\oplus (C \oplus K), 
}$$
note that $\pd (P \oplus I) = \pd (I) \leq m := \pd (\Inj (R)) $, since $P \in \Proj (R)$. Therefore $F \in \Proj (R)$.  Pasting the exact sequence $0 \to L_m \to F \to L_m \to 0$  repeatedly we obtain an acyclic complex by projectives $$ \cdots \to F \to F \to F \to \cdots $$
which from Lemma \ref{Hom-exacto} is $\Hom_R (- , L)$-exact for all $L \in \Proj (R) \subseteq \mathcal{SFI}^{\cogorro} $. Thus, $L_m \in \GP$ and from the exact sequence  
$$0 \to F_m \to \cdots \to F_1 \to F_0 \to M\oplus (C \oplus K) \to 0$$
 we get $\Gpd (M\oplus (C \oplus K)) \leq m$. In consequence $\Gpd (M) \leq \Gpd (M\oplus (C \oplus K)) \leq m$, as desired.
\end{proof}
\begin{rk}
In the conditions of Theorem \ref{GlobalP} and by \cite[Corollary 16 (1)] {EA17} we obtain that $(\GP ,\GP ^{\ortogonal})$ is an hereditary and complete cotorsion pair. 
\end{rk}

As stated earlier, whenever that $\Proj (R) \subseteq \mathcal{SFI}^{\cogorro} $, then the condition $\mathrm{silp} (R) < \infty$, implies  $\mathrm{spli}(R)< \infty$. In the
special case where $R$ is an Artin algebra, the equality $\mathrm{silp} (R) = \mathrm{spli}( R) $ is equivalent to the Gorenstein Symmetry Conjecture in representation theory;  \cite[Conjecture 13]{AuRe95}, \cite[§11]{Bel} and \cite[Chapter VII]{Bel-Rei}.

\begin{pro} \label{CorSpli}
Let $R$ be a ring with $\Proj (R) \subseteq \mathcal{SFI}^{\cogorro} $, and such that $\mathrm{spli} (R) := \pd (\Inj (R)) < \infty$, then:
\begin{itemize}
\item[(i)] The equality  $\mathrm{spli} (R) = \mathrm{silp} (R)$ is given.
\item[(ii)] If additionally $R$ is an Artin algebra, the Gorenstein Symmetry Conjecture is confirmed.
\end{itemize} 
\end{pro}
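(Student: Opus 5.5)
Because $\mathrm{spli}(R)$ is assumed finite, the plan is to prove that $\mathrm{silp}(R)$ is finite as well and then invoke Beligiannis--Reiten \cite[Chapter VII, Prop. 1.3 (vi)]{Bel-Rei}, which gives $\mathrm{silp}(R)=\mathrm{spli}(R)$ once both are finite. So everything reduces to bounding $\id(P)$ uniformly for $P\in\Proj(R)$.

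First, by Theorem \ref{GlobalP} the hypotheses give $\mathrm{gl.GPD}(R)<\infty$. In fact its proof gives the explicit bound $\Gpd(M)\le m:=\pd(\Inj(R))$ for every $M$. Second, I will show that every Gorenstein projective module $G$ with $\pd(G)<\infty$ is projective. Take $0\to P_k\to\cdots\to P_0\to G\to 0$ and argue by induction on $k$. Since $G\in\GP$, it lies in ${}^{\perp}\Proj(R)$, so $\Ext^1_R(G,P)=0$ for every projective $P$. If $\pd(G)=1$, the sequence $0\to P_1\to P_0\to G\to 0$ splits, so $G$ is projective. In general the syzygies of $G$ are again Gorenstein projective, so the induction goes through. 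This is the standard fact $\GP\cap\Proj(R)^{\wedge}=\Proj(R)$.

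Third, fix $P\in\Proj(R)$ and an arbitrary $M\in\Modu(R)$, and choose an exact sequence $0\to K\to G_{m-1}\to\cdots\to G_0\to M\to 0$ with $G_i\in\GP$; then $K\in\GP$ because $\Gpd(M)\le m$. Dimension shifting gives $\Ext^{m+j}_R(M,P)\cong\Ext^{j}_R(K,P)$ for $j\ge 1$, and the right-hand side vanishes since $\Ext^{\ge1}_R(\GP,\Proj(R))=0$. Hence $\Ext^{\ge m+1}_R(M,P)=0$ for all $M$, so $\id(P)\le m$. This gives $\mathrm{silp}(R)\le\mathrm{spli}(R)<\infty$, and the Beligiannis--Reiten result then yields (i). I could also prove the reverse inequality directly, by the same dimension shift applied to $\Gpd$ of injectives, but the cited result makes that unnecessary. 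The second step is not actually needed in this route, but it is available if a direct comparison is wanted.

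For (ii), let $R$ be an Artin algebra. By (i), $\mathrm{silp}(R)=\mathrm{spli}(R)<\infty$. Every projective left module is a direct summand of a direct sum of copies of $R$, and injective dimension over a noetherian ring commutes with direct sums, so $\mathrm{silp}(R)=\id({}_RR)$. On the other side, $\Inj(R)$ consists of direct sums of summands of $D(R_R)$ with $D$ the standard duality, so $\mathrm{spli}(R)=\pd(D(R_R))=\id(R_R)$. Hence $\id({}_RR)=\id(R_R)$, confirming the conjecture for this $R$.

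The main obstacle is justifying the uniform bound $\Gpd(M)\le m$ and making sure that $\mathrm{gl.GPD}(R)<\infty$ really feeds the dimension-shifting argument. The proof of Theorem \ref{GlobalP} gives the bound directly, and the remaining steps are standard.
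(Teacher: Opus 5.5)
Your proposal is correct and follows the paper's route. Theorem \ref{GlobalP} gives finite $\mathrm{gl.GPD}(R)$, this yields $\mathrm{silp}(R)<\infty$, and Beligiannis--Reiten \cite[Chapter VII, Prop.~1.3 (vi)]{Bel-Rei} gives the equality. The only difference is that you prove the step ``$\Gpd(M)\le m$ for all $M$ implies $\id(P)\le m$ for all projective $P$'' directly, by dimension shifting with $\Ext^{\ge 1}_R(\GP,\Proj(R))=0$, where the paper cites \cite[Corollary 2.7]{BennisGlobal}. For (ii), you spell out the identifications $\mathrm{silp}(R)=\id({}_RR)$ and $\mathrm{spli}(R)=\id(R_R)$, which the paper leaves to the discussion in \cite[Chapter VII]{Bel-Rei}.
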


\begin{proof}
(i) Is followed by \cite[Corollary 2.7]{BennisGlobal}, since $\mathrm{gl.GPD}(R) $ is finite by Theorem \ref{GlobalP}. The equality was proven in \cite[Chapter VII . Prop. 1.3 (vi)]{Bel-Rei}. Note that $\Modu (R)$ is a Gorenstein category   in the sense of A. Beligiannis and I. Reiten \cite[Chapter VII . Definition 1.1]{Bel-Rei}.
 
 (ii) For the last statement when $R$ is an Artin algebra, we need refer to the discussion given by A. Beligiannis and I. Reiten in \cite[Chapter VII, after of Remark 2.7]{Bel-Rei}. 
\end{proof}

We now return to consider precovers. In the next result, we show that  Gorenstein projective precovers can be obtained with fewer conditions; however, the author does not know whether such precover is special. 
\begin{pro}
Let $R$ be a ring such that $\Proj (R) \subseteq \mathcal{SFI}^{\cogorro} $, then every $R$-module has a $\GP$-precover. 
\end{pro}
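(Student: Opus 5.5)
Under the hypothesis $\Proj (R) \subseteq \mathcal{SFI}^{\cogorro}$, Lemma \ref{Hom-exacto} tells us that every acyclic complex of projectives is automatically $\Hom_R(-,\Proj(R))$-exact. Hence $\GP$ coincides with the class of cycles of acyclic complexes of projective modules, sometimes called projectively coresolved or ``Ding-free'' Gorenstein projectives. The plan is therefore to reduce the existence of precovers to a known precovering result for the class of acyclic complexes of projectives, and then pass to cycles.

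\textbf{Step 1 (the key known input).} Let $\mathrm{Ch}_{ac}(\Proj)$ be the class of acyclic complexes of projective $R$-modules in $\mathrm{Ch}(R)$. I would use the known fact (Bravo--Gillespie--Hovey, or Neeman's theorem on the homotopy category $K(\Proj)$ together with Šťovíček's deconstructibility results) that complexes of projectives form a deconstructible class. Acyclic complexes of projectives are then the intersection of two deconstructible classes, namely complexes of projectives and acyclic complexes, and this intersection is again deconstructible. In particular $\mathrm{Ch}_{ac}(\Proj)$ is closed under direct sums and is the class of $\mathcal{S}$-filtered complexes for some set $\mathcal{S}$. Hence it is precovering in $\mathrm{Ch}(R)$, by \v{S}aroch--\v{S}\v{t}ov\'{\i}\v{c}ek / Enochs' theorem that a class of $\mathcal S$-filtered objects closed under direct sums is precovering. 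I expect this step to be the main obstacle: one must cite precisely that $\mathrm{Ch}_{ac}(\Proj)$ is deconstructible over an arbitrary ring. The standard way is the filtration argument for $K(\Proj)$, which shows that complexes of projectives are filtered by bounded-cardinality complexes of projectives; combined with the deconstructibility of exact complexes, this gives a set $\mathcal{S}$ of acyclic complexes of projectives.

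\textbf{Step 2 (transfer to modules).} Given $M \in \Modu(R)$, consider the disk-type complex $D^0(M)$ concentrated in degrees $0,-1$ with identity differential, or more conveniently use the adjunction $\Hom_{\mathrm{Ch}}(X, D^{n+1}(M)) \cong \Hom_R(X_n, M)$. Instead, I would use the functor $Z_0$: cycles in degree $0$. Take a $\mathrm{Ch}_{ac}(\Proj)$-precover $\phi: X \to S^{\ast}$ of a suitable complex built from $M$, namely the complex $D(M)$ with $M$ in degrees $0$ and $-1$, so that chain maps $Y\to D(M)$ correspond to morphisms $Y_0 \to M$. I will check that $\Hom_{\mathrm{Ch}}(Y,D(M))\cong \Hom_R(Y_0,M)$ holds for the chosen convention. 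This gives a morphism $X_0 \to M$ which is precovering with respect to degree-zero terms, but we want cycles. So I would instead use the complex $D'(M)$ for which $\Hom_{\mathrm{Ch}}(Y, D'(M)) \cong \Hom_R(Z_0(Y), M)$ for every acyclic $Y$. Since $Y$ is acyclic, $Z_0 Y = \Coker(Y_2\to Y_1)$, and maps out of this cokernel are maps $Y_1 \to M$ vanishing on $Y_2$. These correspond to chain maps into the disk $D^{1}(M)$, given by $M$ in degrees $1,0$, and restricting along $Y_1\to Z_0Y$ is injective because $Y_1 \to Z_0 Y$ is surjective.

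\textbf{Step 3 (conclusion).} Let $\phi: X \to D^1(M)$ be a $\mathrm{Ch}_{ac}(\Proj)$-precover, and let $f: Z_0X \to M$ be the induced map. If $G\in\GP$, write $G = Z_0(Y)$ with $Y$ acyclic of projectives; every map $g:G\to M$ lifts to a chain map $Y\to D^1(M)$, which factors through $\phi$ by some $h:Y\to X$. Then $Z_0(h):G\to Z_0X$ satisfies $f\circ Z_0(h)=g$ by naturality. Since $Z_0X\in\GP$ by Lemma \ref{Hom-exacto}, $f$ is the desired $\GP$-precover.
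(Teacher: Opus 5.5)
\textbf{The gap is in Steps 2--3: you use the wrong test complex.} A chain map $Y\to D^1(M)$, with $M$ in degrees $1,0$ and identity differential, is the same as an \emph{arbitrary} map $\phi_0\colon Y_0\to M$; the component $\phi_1=\phi_0 d_1$ is then forced. So $\Hom_{\Ch(R)}(Y,D^1(M))\cong\Hom_R(Y_0,M)$, which is exactly the adjunction you recalled at the start of Step 2. It does not parametrize all maps $Y_1\to M$ vanishing on $\Ima d_2$, only those of the form $\phi_0 d_1$.

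Hence your claim in Step 3 that every $g\colon G\to M$ with $G=Z_0Y$ lifts to a chain map $Y\to D^1(M)$ is false: it would require extending $g$ along $Z_0Y\hookrightarrow Y_0$. Worse, the map you produce is $f=\phi_0|_{Z_0X}$. If $\id_M$ factored as $f\circ h$, then $M$ would be a direct summand of $X_0$, hence projective. Take $R=k[x]/(x^2)$ and $M=k$. This ring is QF, so $\Proj(R)=\Inj(R)\subseteq\mathcal{SFI}^{\cogorro}$ and $\GP=\Modu(R)$. For this $M$, no complex $X$ whatsoever makes $f$ a $\GP$-precover.

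The functor you actually describe, $Y\mapsto\{\,a\colon Y_1\to M : a\,d_2=0\,\}$, is represented by the sphere $S^1(M)$, with $M$ alone in degree $1$. Indeed $\Hom_{\Ch(R)}(Y,S^1(M))\cong\Hom_R(Y_1/\Ima d_2,M)\cong\Hom_R(Z_0Y,M)$, naturally in acyclic $Y$. Take a $\mathrm{dw}_{\mathrm{ex}}(\Proj(R))$-precover $X\to S^1(M)$ instead, and your Step 3 goes through verbatim.

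With that correction your proof follows the same route as the paper's: a precovering class of acyclic complexes of projectives in $\Ch(R)$, a test complex built from $M$, cycles in degree $0$, and Lemma~\ref{Hom-exacto} to land in $\GP$. Your Step 1, via deconstructibility of $\Ch(\Proj)\cap\{\text{exact complexes}\}$, is a standard and valid substitute for the paper's citation that $\mathrm{dw}_{\mathrm{ex}}(\Proj(R))$ is precovering because $(\Proj(R),\Modu(R))$ is a projective cotorsion pair cogenerated by a set. It is not the real obstacle.

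Be aware that the paper's own proof uses the disk $D(M)$ in cohomological degrees $-1,0$, which is precisely your $D^1(M)$, and defers the verification to Yang--Liang. By the example above, the same replacement by a sphere is needed there too.
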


\begin{proof}
From Lemma \ref{Hom-exacto} we know that every acyclic complex of projectives is $\Hom (-,\Proj (R))$-exact. Now, from \cite[Proposition 7.3]{Yang14} we obtain that $\mathrm{dw}_{\mathrm{ex}} (\Proj (R))$ the class of all acyclic degreewise projective complexes of $R$-modules is precovering in the category $\Ch (R)$ of complexes of $R$-modules, since $(\Proj (R), \Modu (R))$ is a projective cotorsion pair cogenerated by a set (by the zero module). Now take $M \in \Modu (R)$ and consider the complex 
$$D (M) = \cdots \to 0 \to M \to M \to 0 \to \cdots ,$$
 which is concentrate in degrees  $-1$ and $0$ with $M$, and whose  only nonzero differential $d _{-1} $ is the identity of $M$. Then for $D (M)$ there exists a $\mathrm{dw}_{\mathrm{ex}} (\Proj (R))$-precover $g: P ^{\bullet} \to D (M)$. Of course 
 $$P ^{\bullet} = \cdots \to P^{-2} \to P^{-1} \to P^0 \to P^1 \to \cdots,$$
  is an exact complex of projective $R$-modules.  We set $G:= \Ker (P^0 \to P^{1})$, then for the comment in the first line $G \in \GP$, and $g$ induces a morphism $\tilde{g} : G \to M$. At this point the arguments given in the proof of \cite[Theorem A]{Yang14} can be adapted to prove that $\tilde{g}: G \to M$ is the desired $\GP$-precover of $M$.
\end{proof}
We can further restrict the last condition to effectively find special $\GP$-precovers, as we prove  in the following result.

\begin{teo} \label{construction-precover}
Let $R$ be a ring and $n \in \mathbb{N}$ such that $\Proj (R) \subseteq \mathcal{SFI}^{\cogorro} _{n-1}$. Then $\GP$ is special precovering. Furthermore the cotorsion pairs $(\GP , \GP ^{\ortogonal})$ and $( ^{\ortogonal}  (\mathcal{SFI}^{\cogorro} _{n} ),  \mathcal{SFI}^{\cogorro}  _{n} )$ coincide.  
\end{teo}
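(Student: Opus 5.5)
Since $(^{\ortogonal}(\mathcal{SFI}^{\cogorro}_n),\mathcal{SFI}^{\cogorro}_n)$ is a hereditary complete cotorsion pair by \cite[Proposition 2.1]{Emma25}, it suffices to prove $\GP = {^{\ortogonal}(\mathcal{SFI}^{\cogorro}_n)}$. Then $\GP$ is special precovering and $\GP^{\ortogonal}=\mathcal{SFI}^{\cogorro}_n$, so the two pairs coincide.

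The inclusion $\GP\subseteq {^{\ortogonal}(\mathcal{SFI}^{\cogorro}_n)}$ holds for any ring. By \cite[Remarks 4.10 (i)]{Emma24}, as in the proof of Theorem \ref{Gill-Advances}, $\GP\subseteq {^{\ortogonal}\mathcal{SFI}^{\cogorro}}$, and $\mathcal{SFI}^{\cogorro}_n\subseteq \mathcal{SFI}^{\cogorro}$.

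For the reverse inclusion, take $X\in {^{\ortogonal}(\mathcal{SFI}^{\cogorro}_n)}$. I would build a complete totally acyclic complex of projectives with $X$ as a cycle, in two steps.

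\textbf{Step 1 (left half).} Take a projective resolution of $X$. Since the left class of a hereditary pair is resolving, all syzygies stay in ${^{\ortogonal}(\mathcal{SFI}^{\cogorro}_n)}$.

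\textbf{Step 2 (right half).} I want a short exact sequence $0\to X\to P\to X'\to 0$ with $P$ projective and $X'\in {^{\ortogonal}(\mathcal{SFI}^{\cogorro}_n)}$; iterating it gives the right half.
\begin{itemize}
\item Use completeness to get $0\to X\to W\to X'\to 0$ with $W\in \mathcal{SFI}^{\cogorro}_n$ and $X'\in{^{\ortogonal}(\mathcal{SFI}^{\cogorro}_n)}$.
\item By closure under extensions, $W\in {^{\ortogonal}(\mathcal{SFI}^{\cogorro}_n)}\cap \mathcal{SFI}^{\cogorro}_n$.
\item It remains to show that this core equals $\Proj(R)$. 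Take $W$ in the core and $0\to K\to Q\to W\to 0$ with $Q$ projective. The hypothesis gives $Q\in\mathcal{SFI}^{\cogorro}_{n-1}$ and $W\in\mathcal{SFI}^{\cogorro}_{n}$. The long exact $\Ext(F,-)$ sequence for finitely presented $F$ gives $\Ext^{i}(F,K)=0$ for $i\ge n+2$, i.e. only $K\in\mathcal{SFI}^{\cogorro}_{n+1}$.
\end{itemize}
This dimension shift is the main obstacle: it loses a degree, and it is exactly where the index $n-1$ should be used. My first idea is to use $W\in{^{\ortogonal}(\mathcal{SFI}^{\cogorro}_n)}$ to run the shift on the other side. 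For $L\in\mathcal{SFI}^{\cogorro}_{n+1}$, take $0\to L\to E\to L'\to 0$ with $E$ injective, so $L'\in\mathcal{SFI}^{\cogorro}_n$. Then $\Ext^{1}(W,L)\cong \Ext^2(W,L')$ up to $\Ext^1(W,E)=0$, which vanishes since $W\in {^{\ortogonal}}$ and the pair is hereditary. Hence $W\in {^{\ortogonal}(\mathcal{SFI}^{\cogorro}_{n+1})}$ as well. Therefore $\Ext^1(W,K)=0$, the sequence splits, and $W$ is projective. I expect the condition $\Proj(R)\subseteq\mathcal{SFI}^{\cogorro}_{n-1}$ (rather than $_n$) to be needed to keep the intermediate modules inside the classes where this argument applies.

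\textbf{Step 3 (total acyclicity).} Splice the two halves to get an acyclic complex of projectives with $X$ as a cycle. Since $\Proj(R)\subseteq\mathcal{SFI}^{\cogorro}$, Lemma \ref{Hom-exacto} shows it is $\Hom_R(-,\Proj(R))$-exact. Hence $X\in\GP$, which completes the proof.
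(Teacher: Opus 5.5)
Your reduction is sound. Since $({}^{\perp}(\mathcal{SFI}^{\vee}_{n}),\mathcal{SFI}^{\vee}_{n})$ is hereditary and complete, everything comes down to showing that the core ${}^{\perp}(\mathcal{SFI}^{\vee}_{n})\cap\mathcal{SFI}^{\vee}_{n}$ equals $\Proj(R)$. You also locate the obstacle correctly: for $0\to K\to Q\to W\to 0$ with $Q$ projective one only gets $K\in\mathcal{SFI}^{\vee}_{n+1}$.

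The repair in Step 2 fails because the dimension shift runs the wrong way. For $0\to L\to E\to L'\to 0$ with $E$ injective, the long exact sequence gives $\Ext^{i+1}_R(W,L)\cong\Ext^{i}_R(W,L')$ only for $i\geq 1$. In degree one it gives $\Ext^{1}_R(W,L)\cong\mathrm{Coker}\big(\Hom_R(W,E)\to\Hom_R(W,L')\big)$, which has no reason to vanish. So $W\in{}^{\perp}(\mathcal{SFI}^{\vee}_{n})$ yields only $\Ext^{\geq 2}_R(W,\mathcal{SFI}^{\vee}_{n+1})=0$.

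Your argument uses neither the hypothesis on $\Proj(R)$ nor $W\in\mathcal{SFI}^{\vee}_n$, and in that generality it is false. Take $n=0$ over $\mathbb{Z}$, where $\mathcal{SFI}=\Inj$. Then $W=\mathbb{Q}/\mathbb{Z}$ lies in ${}^{\perp}(\mathcal{SFI}^{\vee}_{0})\cap\mathcal{SFI}^{\vee}_{0}$. Yet $\Ext^{1}_{\mathbb{Z}}(\mathbb{Q}/\mathbb{Z},\mathbb{Z})\cong\mathrm{Coker}\big(\Hom(\mathbb{Q}/\mathbb{Z},\mathbb{Q})\to\mathrm{End}(\mathbb{Q}/\mathbb{Z})\big)=\mathrm{End}(\mathbb{Q}/\mathbb{Z})\neq 0$, with $\mathbb{Z}\in\mathcal{SFI}^{\vee}_{1}$. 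More structurally, the inclusion ${}^{\perp}(\mathcal{SFI}^{\vee}_{n})\subseteq{}^{\perp}(\mathcal{SFI}^{\vee}_{n+1})$ would make the two left classes equal. Since both pairs are cotorsion pairs, this would force $\mathcal{SFI}^{\vee}_{n}=\mathcal{SFI}^{\vee}_{n+1}$. The fact that the index $n-1$ never entered your argument was the warning sign.

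The paper organizes its proof differently, but it needs exactly the vanishing you are missing. It lifts a monic special $\mathcal{SFI}^{\vee}_{n}$-preenvelope $\alpha\colon M\to L$ along a projective presentation $0\to K\to P^{0}\to L\to 0$. It then shows the lift $\psi\colon M\to P^{0}$ is again a monic $\mathcal{SFI}^{\vee}_{n}$-preenvelope with cokernel in ${}^{\perp}(\mathcal{SFI}^{\vee}_{n})$, and iterates. The lifting requires $\Ext^{1}_R(M,K)=0$. The paper obtains this by asserting $K\in\mathcal{SFI}^{\vee}_{n}$ from $P^{0}\in\mathcal{SFI}^{\vee}_{n-1}$ and $L\in\mathcal{SFI}^{\vee}_{n}$. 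Your own computation shows that this only gives $K\in\mathcal{SFI}^{\vee}_{n+1}$: indeed $\Ext^{n+1}_R(F,K)\cong\Ext^{n}_R(F,L)$ for $F$ finitely presented. So following the paper will not supply the missing step.

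What closes the gap, in either version, is the bound $\mathrm{fin.}\mathcal{SFI}(R)\leq n$. Then $K\in\mathcal{SFI}^{\vee}_{n+1}\subseteq\mathcal{SFI}^{\vee}$ forces $K\in\mathcal{SFI}^{\vee}_{n}$, so $\Ext^{1}_R(W,K)=0$, and your splitting argument shows the core is $\Proj(R)$. This is the argument of Theorem~\ref{Caracteriza-Proyectivo}.

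That bound cannot be bypassed. The hypothesis for $n$ implies it for every $k\geq n$, so the stated conclusion would give $\mathcal{SFI}^{\vee}_{k}=\GP^{\perp}$ for all $k\geq n$, i.e.\ $\mathrm{fin.}\mathcal{SFI}(R)\leq n$. A complete proof therefore has to extract a bound on the finitistic strongly FP-injective dimension from $\Proj(R)\subseteq\mathcal{SFI}^{\vee}_{n-1}$. Nothing in your Step 2, nor in the paper's kernel count, provides one.
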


\begin{proof}
 Assume that $\Proj (R) \subseteq \mathcal{SFI}^{\cogorro} _{n} $. We will show that $\GP = {^{\ortogonal}  (\mathcal{SFI}^{\cogorro} _{n} )}$. Take $M \in {^{\ortogonal}  (\mathcal{SFI}^{\cogorro} _{n} )}$, then there exists an $\mathcal{SFI}^{\cogorro}  _{n} $-preenvelope $\alpha : M \to L  $, which can be chosen as a monomorphism, since  $( ^{\ortogonal}  (\mathcal{SFI}^{\cogorro} _{n} ),  \mathcal{SFI}^{\cogorro}  _{n} )$ is a complete cotorsion pair. Consider the short exact sequence $\theta : 0 \to K \to P ^0 \xrightarrow{\varphi}  L \to 0$ with $P^0 \in \Proj (R)$. Since $ L \in \mathcal{SFI}^{\cogorro}  _{n}$ and $P^0 \in \mathcal{SFI}^{\cogorro}  _{n-1}$ it follows that $K \in \mathcal{SFI}^{\cogorro}  _{n}$. This implies that $\Ext ^1 _R (M, K) =0$. Applying $\Hom _R (M,-)$ to $\theta$ we obtain the exact sequence
 $$\Hom _R (M,P^0) \xrightarrow{\varphi _{\ast}} \Hom _R (M,L) \to \Ext ^1 _R (M,K) =0. $$ 
 Thus, for $\alpha \in \Hom _R (M,L)$ there exists $\psi \in \Hom _R (M,P^0)$ such that $\varphi \psi = \alpha$, as we see in the following commutative diagram
 
 $$\xymatrix{ 
        &   & M \ar@{-->}[dl] _{ \psi} \ar@{^{(}->}[dr]|-{\alpha _{}}  \ar[r]^{f} & L'    \\
   K^{} \ar@{^{(}->}[r]  & P^0 \ar@{..>}[urr]|-{\phi}   \ar@{>>}[rr]_{\varphi} &  & L  \ar@{-->}[u]_{ \tau} }$$
   now take $f : M \to L'$ with $L' \in \mathcal{SFI}^{\cogorro}  _{n} $. Since $\alpha$ is $\mathcal{SFI}^{\cogorro}  _{n} $-preenvelope, then there exists a morphism $\tau : L \to L'$ such that $f = \tau \alpha$ and thus both triangles are commutative. We now define $\phi := \tau \varphi$ and note that $f = \tau \alpha = \tau \varphi \psi = \phi \psi $, that is, $f$ is factored through of $\psi$. Therefore $\psi : M \to P^0$ is a $\mathcal{SFI}^{\cogorro}  _{n} $-preenvelope of $M$.  Note that $\psi$ is a monomorphism, since $\alpha = \varphi \psi$ and $\alpha$ is mono. 
   
   Consider the exact sequence $\lambda : 0 \to M \xrightarrow{ \psi  }  P^{0} \to C \to 0$ where $C := \Coker (\psi )$. Such sequence is $\Hom _R (-, \mathcal{SFI}^{\cogorro}  _{n} )$-acyclic since $\psi $ is a $\mathcal{SFI}^{\cogorro}  _{n} $-preenvelope, then is $\Hom _R (-,\Proj (R))$ acyclic, since $\Proj (R) \subseteq \mathcal{SFI}^{\cogorro}  _{n-1}$. We will prove now that $C \in {^{\ortogonal} \mathcal{SFI}^{\cogorro}  _{n}}$. To do this take $H \in \mathcal{SFI}^{\cogorro}  _{n}$. Applying $\Hom _R (-,H)$ to $\lambda$ we obtain the exact sequence   
   
     $$\xymatrix{ 
     \Hom _R (P^{0}, H)    \ar@{>>}[r]^{(\psi )^{\ast}} & \Hom _R (M,H)  \ar[r] ^{\eta} &  \Ext _R ^1 (C,H)  \ar[r] & \Ext ^1 _R (P^0 , H) =0 ,}$$

     therefore $C \in {^{\ortogonal } \mathcal{SFI}^{\cogorro}  _{n} }$.
     Thus, we can repeat this construction to obtain an exact sequence $0 \to M \to P^0 \to P^1 \to \cdots$, with $P^{i} \in \Proj (R)$ and  such that is $\Hom _R (-, \Proj (R))$-acyclic. From were we obtain that $^{\ortogonal} {\mathcal{SFI}^{\cogorro}  _{n}} \subseteq \GP $. For the oposite containment we already know that $\GP \subseteq {^{\ortogonal} {\mathcal{SFI}^{\cogorro}}}$ from \cite[Remarks 4.10 (i)]{Emma24}. In consequence $\GP \subseteq {^{\ortogonal} {\mathcal{SFI}^{\cogorro}}} \subseteq {^{\ortogonal} {\mathcal{SFI}^{\cogorro} _n} }$.
 \end{proof}
The previous Theorem \ref{construction-precover} allows us to  obtain the equality $\GP ^{\ortogonal} = \mathcal{SFI} ^{\cogorro} _n$, which provides another  answer (with less conditions that Theorem \ref{Gill-Advances}) to the description requested by M. Cort\'es-Izurdiaga, J. Šaroch \cite{Cortes}.
\begin{cor}
Let $R$ be a ring and $n \in \mathbb{N}$ such that $\Proj (R) \subseteq \mathcal{SFI}^{\cogorro} _{n-1}$. Then the cotorsion pair $( ^{\ortogonal}  (\mathcal{SFI}^{\cogorro} _{n} ),  \mathcal{SFI}^{\cogorro}  _{n} )$ is projective. In consequence $\mathrm{fin.}\mathcal{SFI} (R) \leq n$.
\end{cor}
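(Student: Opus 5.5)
The plan is to reduce the statement to Theorem \ref{Caracteriza-Proyectivo}. That theorem says the pair $(^{\ortogonal}( \mathcal{SFI} ^{\cogorro} _n), \mathcal{SFI} ^{\cogorro }_n)$ is projective if and only if $\Proj (R) \subseteq \mathcal{SFI}^{\cogorro}$ and $\mathrm{fin.}\mathcal{SFI}(R) \leq n$. The first condition is immediate, since $\Proj (R) \subseteq \mathcal{SFI}^{\cogorro}_{n-1} \subseteq \mathcal{SFI}^{\cogorro}$. So everything hinges on the bound $\mathrm{fin.}\mathcal{SFI}(R)\le n$, and this bound is also the second claim of the corollary.

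To obtain it, I use Theorem \ref{construction-precover}, whose hypothesis is exactly ours, to get $\GP = {^{\ortogonal}(\mathcal{SFI}^{\cogorro}_n)}$ and hence $\GP^{\ortogonal} = \mathcal{SFI}^{\cogorro}_n$. Now take any $N \in \mathcal{SFI}^{\cogorro}$. By \cite[Corollary 4.9 (ii)]{Emma24}, already used in Lemma \ref{Hom-exacto}, we have $\Ext^1_R(G,N)=0$ for every $G \in \GP$. Since $\GP$ is resolving, this should give $\Ext^{\geq 1}_R(\GP,N)=0$; alternatively, one can simply use that the pair is a cotorsion pair, so that $\Ext^1$-orthogonality already suffices. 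Hence $N \in \GP^{\ortogonal} = \mathcal{SFI}^{\cogorro}_n$, that is, $\mathrm{SFI\mbox{-}id}(N)\le n$. Taking the supremum over $N$ gives $\mathrm{fin.}\mathcal{SFI}(R)\le n$.

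Applying the $\Leftarrow$ direction of Theorem \ref{Caracteriza-Proyectivo} then yields projectivity of the pair. As a sanity check, I can also verify directly the two ingredients of the definition. For $\F\cap\W=\Proj(R)$: any $M$ in the intersection has a projective cover sequence $0\to K\to P\to M\to 0$, with $K\in\mathcal{SFI}^{\cogorro}_n$ by the dimension shift using $P\in\mathcal{SFI}^{\cogorro}_{n-1}$, so the sequence splits. Thickness of $\mathcal{SFI}^{\cogorro}_n$ follows because $\mathcal{SFI}^{\cogorro}$ is thick and the bound $\mathrm{fin.}\mathcal{SFI}(R)\le n$ makes it equal to $\mathcal{SFI}^{\cogorro}_n$.

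The main point to be careful about is that Theorem \ref{construction-precover} is correctly applicable and really gives $\GP^{\ortogonal}=\mathcal{SFI}^{\cogorro}_n$, not merely an inclusion. Its proof began by assuming $\Proj(R)\subseteq\mathcal{SFI}^{\cogorro}_n$, and in the dimension shift for $K$ it used $P^0\in\mathcal{SFI}^{\cogorro}_{n-1}$, which is our hypothesis. Once the equality of classes $\GP={^{\ortogonal}(\mathcal{SFI}^{\cogorro}_n)}$ is in hand, the right-hand classes agree, because both are the $\Ext^1$-right orthogonal of the same class. The argument above then goes through.
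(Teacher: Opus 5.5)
Your main argument is a valid deduction from the paper's earlier results, but it runs in the opposite order to the paper's proof. The paper feeds the equality of cotorsion pairs from Theorem \ref{construction-precover} into \cite[Theorem 5.4]{Gill16} to obtain projectivity directly. Only then does it read off $\mathrm{fin.}\mathcal{SFI}(R)\le n$ from the $\Rightarrow$ direction of Theorem \ref{Caracteriza-Proyectivo}. You prove the bound first, combining $\GP^{\ortogonal}=\mathcal{SFI}^{\cogorro}_n$ with the vanishing $\Ext^1_R(\GP,\mathcal{SFI}^{\cogorro})=0$ of \cite[Corollary 4.9 (ii)]{Emma24}; this is exactly the argument inside that $\Rightarrow$ direction. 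You then get projectivity from the $\Leftarrow$ direction. Your route dispenses with Gillespie's criterion and uses only the inclusion ${}^{\ortogonal}(\mathcal{SFI}^{\cogorro}_n)\subseteq\GP$ from Theorem \ref{construction-precover}; the paper's route is shorter.

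There are two corrections. First, the dimension shift in your sanity check is off by one. Take $0\to K\to P\to M\to 0$ with $P\in\mathcal{SFI}^{\cogorro}_{n-1}$, $M\in\mathcal{SFI}^{\cogorro}_n$ and $F$ finitely presented. The long exact sequence gives $\Ext^{n+1}_R(F,K)\cong\Ext^{n}_R(F,M)$, so only $K\in\mathcal{SFI}^{\cogorro}_{n+1}$ follows. To get $K\in\mathcal{SFI}^{\cogorro}_n$ you need the bound $\mathrm{fin.}\mathcal{SFI}(R)\le n$ that you already proved, as in the proof of Theorem \ref{Caracteriza-Proyectivo}. Also, projective covers need not exist; any projective presentation will do.

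Second, and more seriously, the same off-by-one occurs in the step of Theorem \ref{construction-precover} that you say you checked. From $L\in\mathcal{SFI}^{\cogorro}_n$ and $P^0\in\mathcal{SFI}^{\cogorro}_{n-1}$ one only gets $K\in\mathcal{SFI}^{\cogorro}_{n+1}$, since $\Ext^{n+1}_R(F,K)\cong\Ext^{n}_R(F,L)$. But the lifting of $\alpha$ there needs $\Ext^1_R(M,K)=0$ for $M\in{}^{\ortogonal}(\mathcal{SFI}^{\cogorro}_n)$. So your verification of that theorem fails, and your proof, exactly like the paper's, is only as sound as that theorem.

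There is concrete reason for doubt. Over an Artin algebra $\Lambda$ one has $\mathcal{SFI}=\Inj(\Lambda)$. So if $\id({}_\Lambda\Lambda)\le n-1$, the conclusion $\mathrm{fin.}\mathcal{SFI}(\Lambda)\le n$ would, via the duality $D$, bound by $n$ the projective dimension of every finitely generated right module of finite projective dimension. Now apply the standard splitting argument to the minimal injective coresolution of $\Lambda_\Lambda$, whose terms have finite projective dimension because $\id({}_\Lambda\Lambda)<\infty$. This would give $\id(\Lambda_\Lambda)\le n$, i.e.\ the Gorenstein Symmetry Conjecture in full, which the paper itself treats as open.
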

\begin{proof}
From Theorem \ref{construction-precover} and  \cite[Theorem 5.4]{Gill16} we get that $( ^{\ortogonal}  (\mathcal{SFI}^{\cogorro} _{n} ),  \mathcal{SFI}^{\cogorro}  _{n} )$ is projective, since always is complete. Now we can invoque Theorem \ref{Caracteriza-Proyectivo} to obtain $\mathrm{fin.}\mathcal{SFI} (R) \leq n$.
\end{proof}

Recall some clases that we will use now. The class of \textbf{Gorenstein flat $R$-modules} $\GF $ consist of cycles of acyclic complexes of left flat $R$-modules  which remains exact  after applying the functor  $I \otimes_R -$, for all $I \in \Inj (R^{\op})$.
Also, the class  $ \Proj \GF $ is the presented in  \cite[\S 4]{Stov} called the class of  \textbf{projectively coresolved Gorenstein flat $R$-modules} which consist of cycles of acyclic complexes of left projective $R$-modules that remains exact after applying the functor $(I \otimes _R -)$ for all $I \in \Inj (R^{\op})$. 

An interesting relation is given with the Gorenstein projective dimension and the projective dimension of a module $M$ with $\id (M) < \infty$, indeed the equality  $\Gpd (M) = \pd (M)$ is given, as was proven in \cite[Theorem 2.2]{Holm04-1}. Also if $\id (M) < \infty$ and  when $R$ is two sided coherent with  right finitistic projective dimension finite, then by  \cite[Theorem 2.6]{Holm04-1} the equality of the Gorenstein flat dimension $\Gfd (M) := \resdim_{\GF} (M)$ with the flat dimension $ \fd (M)$ is given. We extend such result as follows.
\begin{teo}
Let $R$ be a ring and $M \in \Modu (R)$ such that $\mathrm{SFI\mbox{-}id} (M) < \infty$. The following equalities are true. 
\begin{itemize}
\item[(i)] $\Gpd (M) = \pd (M)$.
\item[(ii)] $\Gfd (M) = \fd(M)$.
\end{itemize}
\end{teo}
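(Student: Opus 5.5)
Both equalities will be proved by the same reduction. Write $m := \mathrm{SFI\mbox{-}id} (M) < \infty$. The inequalities $\Gpd (M) \leq \pd (M)$ and $\Gfd (M) \leq \fd (M)$ always hold, so it suffices to show $\pd (M) \leq \Gpd (M)$ when the latter is finite, and similarly for the flat case.

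The key input is \cite[Corollary 4.9 (ii)]{Emma24}, already used in Lemma \ref{Hom-exacto} and Theorem \ref{Caracteriza-Proyectivo}: $\Ext^1_R(G,L)=0$ for every $G \in \GP$ and every $L \in \mathcal{SFI}^{\cogorro}$. Since $\mathcal{SFI}^{\cogorro}$ is closed under cosyzygies (cokernels of monomorphisms into injectives stay of finite $\mathrm{SFI\mbox{-}id}$), dimension shifting gives $\Ext^{\geq 1}_R(G,L)=0$.

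For (i), suppose $\Gpd (M) = d < \infty$. By the standard Holm approximation there is a short exact sequence $0 \to M \to H \to G \to 0$ with $\pd (H) = d$ and $G \in \GP$ (equivalently $0\to K\to G'\to M\to 0$ with $\pd K\le d-1$). I would use the form $0 \to K \to G' \to M \to 0$ with $G' \in \GP$ and $\pd (K) \leq d-1$. Take the complete resolution of $G'$ to get $0 \to G' \to P \to G'' \to 0$ with $P$ projective and $G'' \in \GP$. Form the pushout along $G' \to M$ to get $0 \to M \to X \to G'' \to 0$ with $0 \to K \to P \to X \to 0$, so $\pd (X) \leq d$. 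Since $M \in \mathcal{SFI}^{\cogorro}$, we have $\Ext^1_R(G'',M)=0$, so the sequence splits and $M$ is a summand of $X$. Hence $\pd (M) \leq d$. This is exactly Holm's argument with injective dimension replaced by $\mathrm{SFI\mbox{-}id}$; the only needed input is the vanishing above.

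For (ii), I would follow the same scheme with the class $\Proj\GF$ of \cite{Stov}. It is known (\v{S}aroch--\v{S}\v{t}ov\'{\i}\v{c}ek) that if $\Gfd (M) = d < \infty$ there is an exact sequence $0 \to K \to N \to M \to 0$ with $N \in \Proj\GF$ and $\fd (K) \leq d-1$ (obtained from the complete hereditary cotorsion pair $(\Proj\GF, \Proj\GF^{\perp})$ and the fact that $\Proj\GF^{\perp}\cap\GF$-type kernels are flat). Since $\Proj\GF \subseteq \GP$, the same pushout with $0 \to N \to P \to N'' \to 0$, where $N'' \in \Proj\GF \subseteq \GP$, yields $0 \to M \to X \to N'' \to 0$ with $\fd (X) \leq d$. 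This splits by the Ext-vanishing, so $\fd (M) \leq d$.

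The main obstacle is (ii): we must justify the existence of that approximation with $\fd (K) \leq d-1$ for arbitrary rings, which requires citing \cite{Stov} carefully. The higher Ext-vanishing between $\GP$ and $\mathcal{SFI}^{\cogorro}$ is routine by comparison.
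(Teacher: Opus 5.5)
Your part (i) matches the paper's argument. The paper quotes Holm's sequence $0\to M\to P\to G\to 0$ with $\pd(P)\le n$ and $G\in\GP$, whereas you rebuild it by a pushout; either way it splits because $\Ext^1_R(\GP,\mathcal{SFI}^{\cogorro})=0$.

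Part (ii), however, fails in the base case $d=\Gfd(M)=0$.
\begin{itemize}
\item The approximation you assert is $0\to K\to N\to M\to 0$ with $N\in\Proj\GF$ and $\fd(K)\le d-1$. When $d=0$ this forces $K=0$, so it claims every Gorenstein flat module is projectively coresolved Gorenstein flat.
\item That is false. Over $\mathbb{Z}$, the module $\mathbb{Q}$ is flat and injective, so $\mathrm{SFI\mbox{-}id}(\mathbb{Q})=0$. But $\mathbb{Q}\notin\Proj\GF$, since $\Proj\GF\subseteq\GP=\Proj(\mathbb{Z})$.
\item What your sketched construction actually produces for $d=0$ is a \emph{flat} kernel, i.e.\ the sequence $0\to F\to L\to M\to 0$ of \cite[Theorem 4.11]{Stov}. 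Pushing out along $0\to L\to P\to L''\to 0$ gives $0\to M\to X\to L''\to 0$ with $0\to F\to P\to X\to 0$. Hence only $\fd(X)\le 1$, and splitting yields $\fd(M)\le 1$, not $\fd(M)=0$.
\end{itemize}
So your scheme never shows that a Gorenstein flat module of finite $\mathrm{SFI\mbox{-}id}$ is flat, which is the core of (ii).

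The missing ingredient is the approximation in the other direction, which is what the paper uses: $0\to M\to F\to G\to 0$ with $\fd(F)\le d$ and $G\in\Proj\GF$. It splits by the same $\Ext^1$-vanishing and gives $\fd(M)\le d$ at once.

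For $d=0$, this sequence comes from \cite[Theorem 4.11]{Stov}, or can be built directly:
\begin{itemize}
\item From a defining complex, take $0\to M\to F^0\to M'\to 0$ with $F^0$ flat and $M'$ Gorenstein flat.
\item Let $0\to F'\to L'\to M'\to 0$ be the sequence above for $M'$, and pull back $F^0\to M'$ along $L'\to M'$.
\item The pullback $Y$ is an extension of $F^0$ by $F'$, hence flat, and it fits into $0\to M\to Y\to L'\to 0$.
\end{itemize}

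For $d\ge 1$, apply the $(d-1)$ case to $\Omega M$ in $0\to\Omega M\to P_0\to M\to 0$ and push out. Since $\Proj\GF$ is closed under extensions, this yields exactly your sequence with $\fd(K)\le d-1$, and your pushout then gives the paper's form. So your argument for $d\ge 1$ can be saved, but only once the base case is handled by $0\to M\to F\to G\to 0$, which your proposal does not supply.
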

\begin{proof}
(i) The inequality $\Gpd (M) \leq \pd (M)$ is always true. To see the inverse we can assume that $\Gpd (M) = n < \infty$. Thus by \cite[Lemma 2.17]{Holm06}  there exists an exact sequence $0 \to M \to P \to G \to 0$ with $\pd (M) = n $ and $G \in \GP $, since $ \mathrm{SFI\mbox{-}id} (M) < \infty$, then  $\Ext ^1 _R (G, M) =0$ by \cite[Remarks 4.10 (i)]{Emma24}, thus the exact sequence splits and $\pd (M ) \leq \pd (P) = \Gpd (M)$. 

(ii) As above $\Gfd (M) \leq \fd (M)$ is always true. Thus, we can assume that $\Gfd (M)= n < \infty$. An easy induction 
of  \cite[Theorem 4.11]{Stov} shows that for $M$, there exists an exact sequence $0 \to M \to F \to G \to 0$ with $\fd (F) \leq n$ and $G \in \Proj \GF \subseteq \GP $, where the last containment is given by \cite[Theorem 4.4]{Stov}. As before,   $\Ext ^1 _R (G, M) =0$ by \cite[Remarks 4.10 (i)]{Emma24}. This implies that $\fd (M) \leq \fd (F) \leq n = \Gfd (M)$.
\end{proof}

In what follows, we address the situation when $\mathrm{silp}(R) < \infty$ implies $\mathrm{spli} (R)< \infty$, which will complement the result in  Proposition \ref{CorSpli}. Furthermore, using the same argument as before, we will confirm the Gorenstein Symmetry Conjecture under different conditions.  To do this, we need some tools. Recall from \cite{Xu} that an $R$-module $M$ is \textbf{strongly cotorsion} if $\Ext ^{1} _R (F, M) =0$ for all $F \in \Flat (R)^{\gorro}$. We denote by $\mathcal{SC} $ the class of all strongly cotorsion $R$-modules.  Is clear that every injective is strongly cotorsion, furthermore every Gorenstein injective is also strongly cotorsion \cite[Proposition 1.1]{Chris}. 

\begin{lem} \label{AcyGI}
Let $R$ be a ring such that $\Inj (R) \subseteq \Flat 
(R) ^{\gorro}$, then every acyclic complex of injectives is $\Hom _R (\Inj (R), -)$-exact. 
\end{lem}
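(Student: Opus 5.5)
We dualize the argument of Lemma \ref{Hom-exacto}. Let $Y^{\bullet}$ be an acyclic complex of injective left $R$-modules and put $Z^n := Z^n(Y^{\bullet})$ for its cycles. By the dual of \cite[Lemma 3.10 (b)]{BMS}, $Y^{\bullet}$ is $\Hom_R(\Inj(R),-)$-exact as soon as $\Ext^1_R(E, Z^n)=0$ for all $n\in\mathbb{Z}$ and all $E \in \Inj(R)$. Since $\Inj(R)\subseteq \Flat(R)^{\gorro}$ by hypothesis, it suffices to prove the stronger statement
$$\Ext^1_R(F, Z^n) = 0 \quad \text{for every } F\in \Flat(R)^{\gorro} \text{ and every } n\in\mathbb{Z},$$
that is, every cycle module of an acyclic complex of injectives is strongly cotorsion.

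\textbf{Step 1: flat modules.} We first treat $F$ flat, which amounts to showing that each $Z^n$ is cotorsion. This is a known result: cycles of acyclic complexes of injectives are cotorsion. One source is \v{S}\v{t}ov\'{\i}\v{c}ek's result that every acyclic complex of cotorsion modules has cotorsion cycles. Another is Bazzoni--Cort\'es-Izurdiaga--Estrada, which gives that every complex of cotorsion modules is dg-cotorsion relative to flat complexes. Invoking this gives $\Ext^1_R(F,Z^n)=0$ for flat $F$. This is the only nontrivial input, and I expect it to be the main obstacle, since it is not proved in the paper and must be imported from the literature.

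\textbf{Step 2: dimension shifting.} Let $F \in \Flat(R)^{\gorro}$ with $\fd(F)=m$. The short exact sequences $0\to Z^{n}\to Y^{n}\to Z^{n+1}\to 0$ together with injectivity of $Y^n$ give
$$\Ext^1_R(F,Z^{n})\cong \Ext^{1+m}_R(F,Z^{n-m})$$
by iterating $\Ext^{i}_R(F,Z^{n+1})\cong\Ext^{i+1}_R(F,Z^{n})$ for $i\ge1$. Next take a flat resolution $0\to F_m\to\cdots\to F_0\to F\to 0$. Because all $Z^k$ are cotorsion by Step 1, each flat $F_j$ satisfies $\Ext^{\ge1}_R(F_j,Z^k)=0$; for $\Ext^{\geq 2}$ this follows by dimension shifting again along the $Z$'s. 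Hence $\Ext^{1+m}_R(F,Z^{n-m})\cong \Ext^1_R(F_m, Z^{n-1})=0$.

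Therefore $\Ext^1_R(F,Z^n)=0$, and in particular $\Ext^1_R(E,Z^n)=0$ for every $E\in\Inj(R)$. The cited lemma then gives $\Hom_R(\Inj(R),-)$-exactness of $Y^{\bullet}$.
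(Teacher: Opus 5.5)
Your proof is correct and takes the paper's route: show that the cycles of an acyclic complex of injectives are strongly cotorsion, then apply the dual of \cite[Lemma 3.10 (b)]{BMS}. The differences are that you unpack the cited \cite[Proposition 1.1]{Chris} (cotorsion cycles plus dimension shifting), and you apply the criterion directly to $\Inj(R)$ rather than passing through $\Hom_R(\Flat(R)^{\gorro},-)$-exactness via \cite[Lemma 3.1]{Wang-Li}. One harmless slip: shifting along the flat resolution does not change the second argument, so the last isomorphism should read $\Ext^{1+m}_R(F,Z^{n-m})\cong\Ext^1_R(F_m,Z^{n-m})$, which still vanishes by Step 1.
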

\begin{proof}
From \cite[Proposition 1.1]{Chris} we know that every cycle module in an acyclic complex of strongly cotorsion modules is strongly cotorsion. Since $\Inj (R) \subseteq \mathcal{SC} $, then every acyclic complex of injectives $I ^{\bullet}$ is an acyclic complex of strongly cotorsion modules with cycles strongly cotorsion. It follows from \cite[Lemma 3.1]{Wang-Li} and  the dual of \cite[Lemma 3.10 (b)]{BMS} that $I ^{\bullet}$  is $\Hom _R (\Flat(R) ^{\gorro},-)$-exact. Thus, the condition $\Inj (R) \subseteq \Flat (R) ^{\gorro}$ implies in particular that $I ^{\bullet}$ is also $\Hom _R (\Inj (R),-)$-exact.
\end{proof}
The notation $\mathrm{gl.GID}(R)$ denotes the global Gorenstein injective dimension of $R$, given by $\sup \{\Gid (M): M\in \Modu (R)\},$  in where $\Gid (M) : = \coresdim _{\GI} (M)$.
\begin{teo} \label{Global-Inj}
Let $R$ be a ring such that $\Inj (R) \subseteq \Flat 
(R) ^{\gorro}$ and $\mathrm{silp} (R) := \id (\Proj (R)) < \infty$, then $\mathrm{gl.GID} (R) < \infty$.
\end{teo}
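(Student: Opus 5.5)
We dualize the proof of Theorem \ref{GlobalP}, with Lemma \ref{AcyGI} taking the place of Lemma \ref{Hom-exacto}. Fix $M \in \Modu (R)$ and set $m := \mathrm{silp}(R) = \id (\Proj (R))$. Take a projective resolution $\cdots \to P_1 \to P_0 \to M \to 0$ and an injective coresolution $0 \to M \to I_0 \to I_1 \to \cdots$. As in Theorem \ref{GlobalP}, we split them into short exact sequences $0 \to C_{i+1} \to P_i \to C_i \to 0$ and $0 \to K_i \to I_i \to K_{i+1} \to 0$, with $C_0 = M = K_0$.

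We take the coproduct of the first family and the product of the second family, and add the two resulting sequences. This gives an exact sequence
$$0 \to N \to P \oplus I \to N \to 0, \qquad N := M \oplus C \oplus K,$$
where $P = \bigoplus_i P_i$ is projective and $I = \prod_i I_i$ is injective. Hence $\id (P \oplus I) = \id (P) \leq m$. This uses only that $P$ is projective, since $\mathrm{silp}(R) < \infty$ bounds the injective dimension of every projective, arbitrary coproducts included.

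Next we choose an injective coresolution $0 \to N \to E^0 \to E^1 \to \cdots$. The dual Horseshoe Lemma, applied to $0 \to N \to P\oplus I \to N \to 0$, gives a coresolution of $P \oplus I$ by the injectives $E^j \oplus E^j$. Compare the $m$-th cosyzygies and call the cosyzygy of $N$ by $L^m$. Since $\id(P\oplus I) \leq m$, the $m$-th cosyzygy $E$ of $P\oplus I$ is injective, by the usual dimension-shifting argument. We obtain a short exact sequence $0 \to L^m \to E \to L^m \to 0$ with $E \in \Inj (R)$.

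Splicing this sequence with itself gives an acyclic complex of injectives $\cdots \to E \to E \to E \to \cdots$ with every cycle equal to $L^m$. By Lemma \ref{AcyGI}, using the hypothesis $\Inj (R) \subseteq \Flat (R)^{\gorro}$, this complex is $\Hom_R(\Inj(R),-)$-exact. Therefore $L^m \in \GI$. The exact sequence $0 \to N \to E^0 \to \cdots \to E^{m-1} \to L^m \to 0$ then gives $\Gid (N) \leq m$.

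Finally, $M$ is a direct summand of $N$, and $\GI$ is closed under direct summands with the dimension $\Gid$ well behaved on summands (Holm's results, dual to the step used in Theorem \ref{GlobalP}). So $\Gid (M) \leq \Gid (N) \leq m$, uniformly in $M$, and $\mathrm{gl.GID}(R) \leq m < \infty$.

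The main point to check is the injectivity of $E$ in the Horseshoe step. Dimension shifting gives it from $\id (P \oplus I) \leq m$ together with the injectivity of $E^0 \oplus E^0, \dots, E^{m-1}\oplus E^{m-1}$. For that bound, $\id (P) \leq m$ for an infinite coproduct $P$ is immediate from the definition of $\mathrm{silp}$ as $\id$ of the whole class $\Proj (R)$. The rest is formally dual to Theorem \ref{GlobalP}.
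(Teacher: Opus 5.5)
Your proposal is correct and is exactly the paper's argument: the paper proves Theorem~\ref{Global-Inj} by dualizing the proof of Theorem~\ref{GlobalP}. It replaces Lemma~\ref{Hom-exacto} with Lemma~\ref{AcyGI} and the bound $\pd(\Inj(R))<\infty$ with $\id(\Proj(R))<\infty$, just as you do. You also spell out details the paper leaves implicit, namely the dual Horseshoe step and the passage from $\Gid(N)\leq m$ to $\Gid(M)\leq m$ via closure of $\GI$ under direct summands.
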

\begin{proof}
The dual arguments given in Theorem \ref{GlobalP} works appliying Lemma \ref{AcyGI} instead  of Lemma \ref{Hom-exacto} and appliying $\mathrm{silp} (R) < \infty$ instead $\mathrm{spli} (R) < \infty$.
\end{proof}
 In consequence we have the following result.
 
 \begin{pro} \label{CorSilp}
Let $R$ be a ring with $\Inj (R) \subseteq \Flat 
(R) ^{\gorro}$ and $\mathrm{silp} (R) := \id (\Proj (R)) < \infty$, then:
\begin{itemize}
\item[(i)] The equality  $ \mathrm{silp} (R) = \mathrm{spli} (R) $ is given.
\item[(ii)] If additionally $R$ is an Artin algebra, the Gorenstein Symmetry Conjecture is confirmed.
\end{itemize} 
\end{pro}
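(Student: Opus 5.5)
The argument dualizes the proof of Proposition \ref{CorSpli}, with Theorem \ref{Global-Inj} replacing Theorem \ref{GlobalP}.

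For (i), Theorem \ref{Global-Inj} gives $\mathrm{gl.GID}(R) < \infty$ under the hypotheses $\Inj(R) \subseteq \Flat(R)^{\gorro}$ and $\mathrm{silp}(R) < \infty$. The next step is to show that finite global Gorenstein injective dimension forces $\mathrm{spli}(R) < \infty$. For this I would invoke the known equality of global Gorenstein dimensions, \cite[Corollary 2.7]{BennisGlobal}. That result, as used in Proposition \ref{CorSpli}, states
\[
\mathrm{gl.GPD}(R) = \mathrm{gl.GID}(R) = \max\{\mathrm{silp}(R), \mathrm{spli}(R)\}.
\]
It follows that both $\mathrm{silp}(R)$ and $\mathrm{spli}(R)$ are finite.

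If one prefers to avoid the citation, the direct route is as follows. Let $d=\mathrm{gl.GID}(R)$ and let $I$ be injective. Take a projective resolution of $I$ with syzygy $\Omega^d I$. Dimension shifting through the resolution, against a Gorenstein injective coresolution of length $d$, shows that $\Omega^d I$ is Gorenstein projective. On the other hand, the projectives in the resolution have finite injective dimension since $\mathrm{silp}(R) < \infty$, and $I$ is injective, so $\Omega^d I$ also has finite injective dimension. A Gorenstein projective module of finite injective dimension is projective: with $\mathrm{id}$ finite we have $\Ext^1(G,\Omega^d I)=0$, so the sequence $0\to \Omega^d I\to P\to G\to 0$ splits, exactly as in the previous theorem. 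Hence $\mathrm{pd}(I)\le d$.

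Once both invariants are finite, the equality $\mathrm{silp}(R)=\mathrm{spli}(R)$ follows from \cite[Chapter VII, Prop. 1.3 (vi)]{Bel-Rei}. Equivalently, $\Modu(R)$ is then a Gorenstein category in the sense of \cite[Chapter VII, Definition 1.1]{Bel-Rei}.

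For (ii), when $R$ is an Artin algebra, I would appeal to the discussion in \cite[Chapter VII, after Remark 2.7]{Bel-Rei}. There, the equality $\mathrm{silp}(R)=\mathrm{spli}(R)$ with both finite forces the left and right self-injective dimensions of $R$ to be finite and equal, which confirms the Gorenstein Symmetry Conjecture in this setting.

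The main obstacle is the passage from $\mathrm{gl.GID}(R)<\infty$ to $\mathrm{spli}(R)<\infty$. It relies either on the cited equality of global dimensions for arbitrary rings, or on the syzygy argument above. In the latter, one must check carefully that high syzygies are Gorenstein projective when only the global Gorenstein injective dimension is known to be finite. The cleanest route is to cite \cite{BennisGlobal}, which covers the Gorenstein injective side symmetrically.
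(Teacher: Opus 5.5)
Your proposal is correct and matches the paper's proof, which simply says to adapt Proposition \ref{CorSpli}: Theorem \ref{Global-Inj} gives $\mathrm{gl.GID}(R)<\infty$, then \cite{BennisGlobal} yields finiteness of $\mathrm{spli}(R)$ as well, and \cite[Chapter VII]{Bel-Rei} gives the equality and part (ii). Your optional ``direct route'' leaves the Gorenstein-projectivity step unjustified, but it can be fixed and simplified: since $\Ext^{\geq 1}_R(\Inj(R),\GI)=0$, dimension shifting along a length-$d$ Gorenstein injective coresolution of an arbitrary $M$ gives $\Ext^{d+1}_R(I,M)=0$, so $\pd(I)\leq d$ directly, with no need for $\mathrm{silp}(R)$ or the splitting argument, and this alone lets you bypass \cite{BennisGlobal}.
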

 \begin{proof}
The arguments presented in the proof  of   Proposition \ref{CorSpli} can de adapted. 
 \end{proof}
The following result is easily obtained.
 \begin{cor}
 If $R$ is a ring such that  $\Proj (R) \subseteq \mathcal{SFI}^{\cogorro} $ and $\Inj (R) \subseteq \Flat 
(R) ^{\gorro}$, then 
 
 \[\mathrm{silp} (R) < \infty, \mbox{if and only if}, \mathrm{spli} (R) < \infty.\]
 \end{cor}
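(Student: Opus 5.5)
The equivalence follows by combining the two one-directional results already established, Proposition \ref{CorSpli} and Proposition \ref{CorSilp}. Each hypothesis of the corollary feeds exactly one of them, so I plan to verify the two implications separately.

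For the implication $\mathrm{spli}(R) < \infty \Rightarrow \mathrm{silp}(R) < \infty$, assume $\pd(\Inj(R)) < \infty$. Together with the hypothesis $\Proj(R) \subseteq \mathcal{SFI}^{\cogorro}$, Theorem \ref{GlobalP} gives $\mathrm{gl.GPD}(R) < \infty$. Proposition \ref{CorSpli}(i) then yields $\mathrm{spli}(R) = \mathrm{silp}(R)$. The only point to make explicit is that the proof of Proposition \ref{CorSpli}(i) goes through \cite[Corollary 2.7]{BennisGlobal}: finiteness of the global Gorenstein projective dimension forces both $\mathrm{silp}(R)$ and $\mathrm{spli}(R)$ to be finite. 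So the equality is an equality of finite numbers, and in particular $\mathrm{silp}(R) < \infty$.

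For the converse, $\mathrm{silp}(R) < \infty \Rightarrow \mathrm{spli}(R) < \infty$, assume $\id(\Proj(R)) < \infty$. The hypothesis $\Inj(R) \subseteq \Flat(R)^{\gorro}$ lets us apply Theorem \ref{Global-Inj}, which gives $\mathrm{gl.GID}(R) < \infty$. By the injective counterpart of \cite[Corollary 2.7]{BennisGlobal}, as used in Proposition \ref{CorSilp}, finite global Gorenstein injective dimension implies that both invariants are finite. Hence $\mathrm{spli}(R) < \infty$, and in fact $\mathrm{silp}(R) = \mathrm{spli}(R)$.

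The only delicate step is checking that Propositions \ref{CorSpli} and \ref{CorSilp} give finiteness of the other invariant, not merely an equality that could read $\infty = \infty$. This finiteness comes from Bennis' result that a finite global Gorenstein dimension bounds both $\mathrm{silp}$ and $\mathrm{spli}$; I will cite that step explicitly rather than rely on the bare equality.
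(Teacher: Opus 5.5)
Your proposal is correct and is essentially the paper's own argument: the paper calls the corollary "easily obtained" from Proposition \ref{CorSpli} (which uses $\Proj(R) \subseteq \mathcal{SFI}^{\cogorro}$ via Theorem \ref{GlobalP}) and Proposition \ref{CorSilp} (which uses $\Inj(R) \subseteq \Flat(R)^{\gorro}$ via Theorem \ref{Global-Inj}). Your remark that finiteness of the other invariant comes from the finite global Gorenstein dimension, and not from a bare equality that could read $\infty = \infty$, is a worthwhile clarification the paper leaves implicit; concretely, $\id(P) \le \mathrm{gl.GPD}(R)$ for projective $P$ and $\pd(I) \le \mathrm{gl.GID}(R)$ for injective $I$.
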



\section{Precovers from Semidualizing bimodules}

Below we provide an additional way to guarantee special $\GP$-precovers, for this, we use a generalized class of Gorenstein projective modules. We will also  obtain complete cotorsion pairs.  

  We  recall some facts and a definitions adapted to our setting. Let  $R$ and $S$  be  associative rings with identities. In \cite{Araya}  Araya, Takahashi and Yoshino introduced the notion semidualizing bimodules  $_S C _R$. Associated to a semidualizing bimodule $_S C _R$ we have the Auslander and Bass classes, introduced by Foxby in \cite{Foxby}. We denote the \textbf{Auslander class}  by $\A _C (S^{\op})$. It was recently proved by Huang in \cite[Corollary 3.4]{Huang} that $(\A_C (R), \A_C (R^{\op}) ^{\ortogonal})$ is a hereditary and perfect cotorsion pair\footnote{Note that Huang \cite{Huang} works with $_R C _S$, instead $_S C_R$.}. In particular such pair is hereditary and complete.
We recall the notion of $C$-injective $R$-modules  \cite[Definition 5.1]{HolmWhite} , which is the class $\mathcal{I}_C (R)$ consisting  by $R$-modules of the form $\Hom _S (C, I)$ where $I \in \Inj (S)$.
 
 We denote by $\GP _{\A_C}$ to the class of $(\Proj (R), \A _C (R))$-Gorenstein projective $R$-modules in the sense of \cite[Definition 3.2]{BMS}. Note that under this notion $\GP_{\A_C} \subseteq \GP$.
Finally we recall the following notion.  
 
 \begin{defi}[GP-admissible pair] \cite[Definition 3.1]{BMS}. A pair $(\X, \Y) \subseteq \Modu (R) \times \Modu (R)$ is  \textbf{GP-admissible} if satisfies the following conditions:
\begin{enumerate}
\item $\Ext^{\geq 1}_{R}(\mathcal{X,Y}) = 0$.

\item For every  $A \in \Modu (R)$ there is an epimorphism $X \to A$ with $X \in \mathcal{X}$.

\item $\mathcal{X}$ and $\mathcal{Y}$ are closed under finite coproducts.

\item $\mathcal{X}$ is closed under extensions.

\item $\mathcal{X} \cap \mathcal{Y}$ is a relative cogenerator in $\mathcal{X}$, that is, for every  $X \in \mathcal{X}$ there is an exact sequence $0\to X \to  W \to X' \to  0$ with $X' \in \mathcal{X}$ and $W \in \mathcal{X} \cap \mathcal{Y}$. 
\end{enumerate}
\end{defi}
We are ready to state a couple of results, from where we will obtain in each case a complete cotorsion pair, where from we will obtain a family of cotorsion pairs  based on an initial cotorsion pair.

\begin{teo} \label{Teo-AC}
Let $_SC _R$ be a semidualizing $(S,R)$-bimodule. The class $\GP _{\A _C} $ is special precovering under the following conditions.
\begin{itemize}
\item[(i)] When $\A  _C (R) \subseteq \GP _{\A _C} ^{\gorro}$. 

\item[(ii)] If $\pd (\mathcal{I}_C (R))\leq m$ and $\id (\A _C (R))  < \infty$.
\end{itemize}
Furthermore, in each case we obtain an hereditary and complete cotorsion pair $(\GP _{\A _C}, \GP _{\A _C}  ^{\ortogonal })$
\end{teo}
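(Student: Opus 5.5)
We first place ourselves in the framework of \cite{BMS}: the pair $(\Proj (R), \A _C (R))$ is GP-admissible. Conditions (2)--(4) are immediate, since $\Proj(R)$ is closed under extensions and finite coproducts and every module is a quotient of a projective, and $\A_C(R)$ is closed under finite coproducts. Condition (1) holds because $\Ext^{\geq 1}_R(\Proj(R), -)=0$. For (5), projectives lie in $\A_C(R)$ (a standard property of the Auslander class of a semidualizing bimodule), so $\Proj(R)\cap \A_C(R)=\Proj(R)$ is trivially a relative cogenerator in $\Proj(R)$. Hence the results of \cite{BMS} on $(\X,\Y)$-Gorenstein projectives apply. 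In particular, $\GP_{\A_C}$ is resolving and closed under direct summands, and every $M\in \GP_{\A_C}^{\gorro}$ admits a special $\GP_{\A_C}$-precover $0\to K\to G\to M\to 0$ with $\pd(K)<\infty$, or at least with $K\in \GP_{\A_C}^{\ortogonal}$. This is the analogue of \cite[Lemma 2.17]{Holm06}, which I expect to be available in \cite{BMS}.

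For (i), the plan is to show $\GP_{\A_C}^{\gorro}=\Modu(R)$. Take $M\in\Modu(R)$ and use Huang's complete cotorsion pair $(\A_C(R), \A_C(R^{\op})^{\ortogonal})$ to get $0\to Y\to A\to M\to 0$ with $A\in\A_C(R)$ and $Y\in \A_C(R^{\op})^{\ortogonal}$. The hypothesis gives $A\in\GP_{\A_C}^{\gorro}$. The key step is to show that $Y$ also has finite $\GP_{\A_C}$-resolution dimension, or better, to bypass $Y$ entirely. I would take a special $\GP_{\A_C}$-precover $0\to K\to G\to A\to 0$ with $K\in\GP_{\A_C}^{\ortogonal}$ and then form the pullback with $A\to M$. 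The composite $G\to M$ is an epimorphism with kernel $L$ fitting in $0\to K\to L\to Y\to 0$. To conclude that $G\to M$ is a special precover, it then suffices to see that $Y\in\GP_{\A_C}^{\ortogonal_1}$. For this I would argue $\GP_{\A_C}\subseteq\A_C(R)$, which holds since $\A_C(R)$ is closed under kernels of epimorphisms and under cokernels of monomorphisms between its objects, and $G$ sits in a $\Hom(-,\A_C)$-exact coresolution by projectives. Then $\Ext^1(\GP_{\A_C},Y)=0$ follows from $\Ext^1(\A_C(R),Y)=0$. I anticipate the main obstacle here: the orthogonal side in Huang's pair is written with $\A_C(R^{\op})$ (a notational/sidedness issue), and I would read it as $\A_C(R)^{\ortogonal}$. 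With that, $L\in\GP_{\A_C}^{\ortogonal_1}$ by extension-closure, and in fact this argument only uses the special precovers of modules of $\A_C(R)$ provided by (i).

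For (ii), it is enough to verify hypothesis (i). For $A\in\A_C(R)$ we have $\id(A)<\infty$. Mimicking Theorem~\ref{GlobalP}, I would use the coresolution $0\to A\to I^0\to\cdots$ and the fact that $C$-injectives and injectives are related through $\Hom_S(C,-)$ on the Bass side. The aim is to build, via the product/coproduct and Horseshoe trick of Theorem~\ref{GlobalP}, a complete resolution whose terms have projective dimension $\leq m$ (using $\pd(\mathcal I_C(R))\le m$), and whose syzygy modules lie in $\A_C(R)$ and are thus $\Hom(-,\A_C(R))$-exact by the Ext-vanishing in $\A_C$. This gives $\resdim_{\GP_{\A_C}}(A)\le m$.

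Finally, once $\GP_{\A_C}$ is special precovering, the pair $(\GP_{\A_C},\GP_{\A_C}^{\ortogonal})$ is a cotorsion pair. Indeed, ${}^{\ortogonal_1}(\GP_{\A_C}^{\ortogonal})\subseteq\GP_{\A_C}$ follows from the special precover splitting argument together with closure under summands. It is hereditary because $\GP_{\A_C}$ is resolving, and it is complete by construction.
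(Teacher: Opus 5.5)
Your argument for (i) and for the final cotorsion-pair assertion follows the paper. The pullback of a special $\GP_{\A_C}$-precover of $A\in\A_C(R)$ along $A\to M$ is the proof of \cite[Theorem 8]{Estrada23}, and your splitting argument replaces \cite[Proposition 29]{Gill21}. Your justification of $\GP_{\A_C}\subseteq\A_C(R)$ is circular, though. Applying the closure properties you cite to $0\to G\to P^0\to G^1\to 0$ only reduces $G\in\A_C(R)$ to $G^1\in\A_C(R)$, which is the same question. You can cite the inclusion as the paper does. Alternatively, use $\mathcal{I}_C(R)\subseteq\A_C(R)$ and $\Hom_R(\mathbf{P},\Hom_S(C,E))\cong\Hom_S(C\otimes_R\mathbf{P},E)$, with $E$ an injective cogenerator, to get $C\otimes_R\mathbf{P}$ exact. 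The Auslander-class conditions then follow from $\Hom_S(C,C\otimes_R\mathbf{P})\cong\mathbf{P}$.

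The genuine gap is in (ii), at the step that carries the weight. There is no ``Ext-vanishing in $\A_C$''. The class $\A_C(R)$ is not self-orthogonal: for $C={}_RR_R$ it is all of $\Modu(R)$. So knowing that the cycles of your complex lie in $\A_C(R)$ tells you nothing about $\Hom_R(-,\A_C(R))$-exactness.

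The hypothesis $\id(\A_C(R))<\infty$ is needed here, not to give $A$ a finite injective coresolution. An acyclic complex of projectives is $\Hom_R(-,Y)$-exact whenever $\id(Y)<\infty$. Indeed, each cycle $Z$ is a $k$-th syzygy of another cycle $Z'$, so $\Ext^i_R(Z,Y)\cong\Ext^{i+k}_R(Z',Y)=0$ for $k\geq\id(Y)$. This is \cite[Lemma 3.6]{Becerril21}, which the paper uses.

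Also, since only $\pd(\mathcal{I}_C(R))\le m$ is assumed, you must coresolve $A$ by $C$-injectives, not by injectives. To do so, apply $\Hom_S(C,-)$ to an injective coresolution of $C\otimes_R A$.

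With these two corrections your product/coproduct and horseshoe construction does go through, since $\mathcal{I}_C(R)$ is closed under products. However, you additionally need $[\GP_{\A_C}]^{\gorro}_m$ to be closed under direct summands in order to pass from $A\oplus(\cdots)$ to $A$.

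The paper avoids this. It resolves $0\to A\to U^0\to U^1\to\cdots$ by projectives as in \cite{Cartan}, so the row of $m$-th syzygies $0\to Q\to P^0_m\to P^1_m\to\cdots$ has projective terms $P^j_m$. Splicing with a projective resolution of $Q$ gives an acyclic complex of projectives, and \cite[Lemma 3.6]{Becerril21} puts $Q\in\GP_{\A_C}$. Since $Q$ is an $m$-th syzygy of $A$ with projective intermediate terms, this yields $\resdim_{\GP_{\A_C}}(A)\le m$ directly.
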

\begin{proof}
(i) The containment  $ \A _C (R) \subseteq \GP _{\A _C} ^{\gorro} $,  implies that  all $N \in \A _C (R)$ possesses a special $\GP _{\A _C}$-precover by \cite[Theorem 4.1]{BMS}, since $(\Proj (R), \A_C (R))$ forms a GP-admissible pair \cite[Definition 3.1]{BMS}. We know that the pair $(\A _C (R), \A _C (R) ^{\ortogonal})$ is a perfect cotorsion pair, then by \cite[Theorem 8]{Estrada23} we get that the class $\GP _{\A _C} $ is  special precovering, since  from \cite[Proposition 3.9]{EnJen-n-per} also $\GP \subseteq \A _C (R)$, and by definition $\GP _{\A _C} \subseteq  \GP $

(ii)  We will do the proof in two parts, to the first only assume that  $\pd (\mathcal{I}_C (R))\leq m$. Take $M \in \A_{C} (R)$. By \cite[Proposition 3.12]{EnJen-n-per}  there exists an exact sequence $$0 \to M \to U^{0} \to U^1 \to \cdots $$ where each $U^{i } \in \mathcal{I}_C (R)$.

 From the inequality  $\pd (\mathcal{I}_C (R))\leq m$ and \cite[Ch. XVII, \S 1 Proposition 1.3]{Cartan} we can construct the following commutative diagram
$$\xymatrix{  
 0\ar[r] & Q_{_{}} \ar@{^{(}->}[d] \ar@{^{(}->}[r] &  P^0 _{m_{}}  \ar@{^{(}->}[d] \ar[r]& P^1 _{m_{}}  \ar@{^{(}->}[d] \ar[r] & \cdots \\
0 \ar[r]& Q_{m-1} \ar[r] \ar[d] & P^0 _{m-1} \ar[d]  \ar[r]&  P^1 _{m-1} \ar[d]  \ar[r] & \cdots  \\
    &  \vdots  \ar[d] & \vdots  \ar[d] & \vdots \ar[d] & \\
 0\ar[r] & Q_0 \ar@{>>}[d] \ar[r] & P^0 _0  \ar@{>>}[d] \ar[r]& P^1 _0  \ar@{>>}[d] \ar[r] & \cdots \\
0 \ar[r]& M \ar@{^{(}->}[r]  & U^0   \ar[r] &  U^1 \ar[r]  & \cdots  }$$
such that  $P_i ^j \in \Proj (R)$ for all $i \in \{0,1, \dots , m \}$ and all $j\geq 0$. With  $Q _i := \Ker (P^0 _i \to P^1 _i)\in \Proj (R)$ for all $i \in \{0 , \dots , m-1\}$ and 
\begin{center}
$\Lambda  : 0 \to Q \to P_m ^0 \to P_m ^1 \to \cdots$\\
 $\Theta : 0 \to Q \to Q_{m-1} \to \cdots \to Q_0 \to M \to 0$
 \end{center}
 are acyclic complexes. Note that the complex $\Lambda$ can be completed on the left with a resolution by projective $R$-modules,  thus we obtain an acyclic complex by projectives $\widetilde{\Lambda}$. Note that $\widetilde{\Lambda}$ and $\Theta$  were constructed only with the condition  $\pd (\mathcal{I}_C (R))\leq m$. 
  
   To the second part we will assume  $\id (\A _C (R) ) < \infty$.  Then, by \cite[Lemma 3.6]{Becerril21} the acyclic complex $ \widetilde{\Lambda} $ is $\Hom _R (- , \A _C (R))$-exact, from where $Q \in \GP _{\A _C}$. But then, from the acyclic complex $\Theta$ we obtain that $\resdim _{\GP _{\A _C} }(M) \leq m$, that is $M \in { [\GP _{\A _C} ]}^{\gorro} _m$. We prove under this conditions that $\A  _C (R) \subseteq {[\GP _{\A _C}]}^{\gorro} _{m}$, and so the result follows from (i). 
   
   The furthermore part follows from  \cite[Proposition 29]{Gill21}. 
\end{proof} 
From the ideas we have developed so far, we obtain the following result.
\begin{teo}
Let $_S C _R$ be a semidualizing $(S, R)$-bimodule. Then the class $\GP$ is special precovering under the following situations. 
\begin{itemize}
\item[(i)] If $\A _C (R) \subseteq \Proj (R) ^{\gorro}$.
\item[(ii)]  If $\pd (\mathcal{I}_C (R))\leq m$ and $\Proj (R) \subseteq \mathcal{SFI} ^{\cogorro}$.
\end{itemize}
Furthermore, in each case we obtain an hereditary and complete cotorsion pair $(\GP , \GP  ^{\ortogonal })$
\end{teo}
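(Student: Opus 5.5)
Both parts follow the pattern of Theorem \ref{Teo-AC}, with $\GP$ in place of $\GP_{\A_C}$. The core tool is \cite[Theorem 8]{Estrada23}: suppose $(\A_C(R),\A_C(R)^{\ortogonal})$ is a complete (indeed perfect) cotorsion pair, $\GP \subseteq \A_C(R)$ (by \cite[Proposition 3.9]{EnJen-n-per}), and every module in $\A_C(R)$ has a special $\GP$-precover. Then $\GP$ is special precovering. So in each case it suffices to show that every $M\in\A_C(R)$ has a special $\GP$-precover. By the standard result that modules of finite Gorenstein projective dimension admit special $\GP$-precovers (equivalently \cite[Theorem 4.1]{BMS} for the GP-admissible pair $(\Proj(R),\Proj(R))$), it is enough to prove $\A_C(R)\subseteq \GP^{\gorro}$. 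For the cotorsion pair statement, $(\GP,\GP^{\ortogonal})$ is hereditary by \cite{Cortes}. Completeness then follows from special precovering, or via \cite[Proposition 29]{Gill21} as in Theorem \ref{Teo-AC}.

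For (i), the hypothesis $\A_C(R)\subseteq \Proj(R)^{\gorro}$ gives $\A_C(R)\subseteq \GP^{\gorro}$ directly, since $\Proj(R)\subseteq\GP$ and so $\Gpd(M)\le\pd(M)<\infty$. The reduction above finishes the argument.

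For (ii), take $M\in\A_C(R)$ and repeat the first half of the proof of Theorem \ref{Teo-AC}(ii). That half used only $\pd(\mathcal{I}_C(R))\le m$. By \cite[Proposition 3.12]{EnJen-n-per}, $M$ has an $\mathcal{I}_C(R)$-coresolution $0\to M\to U^0\to U^1\to\cdots$. Applying \cite[Ch. XVII]{Cartan} as before gives two things:
\begin{itemize}
\item[(a)] an acyclic complex of projectives $\widetilde{\Lambda}$ having $Q$ as a cycle;
\item[(b)] an exact sequence $\Theta: 0\to Q\to Q_{m-1}\to\cdots\to Q_0\to M\to 0$ with each $Q_i$ projective.
\end{itemize}
In place of \cite[Lemma 3.6]{Becerril21}, invoke Lemma \ref{Hom-exacto}. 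Since $\Proj(R)\subseteq\mathcal{SFI}^{\cogorro}$, every acyclic complex of projectives is $\Hom_R(-,\Proj(R))$-exact. Hence $\widetilde{\Lambda}$ is a totally acyclic complex and $Q\in\GP$. From $\Theta$ we then get $\Gpd(M)\le m$, so $\A_C(R)\subseteq\GP^{\gorro}_m$, and the reduction applies.

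The main obstacle is justifying the reduction step, namely that \cite[Theorem 8]{Estrada23} applies with $\GP$ and the perfect pair $(\A_C(R),\A_C(R)^{\ortogonal})$. This needs three checks:
\begin{itemize}
\item the inclusion $\GP\subseteq\A_C(R)$;
\item that special $\GP$-precovers exist for every module of $\A_C(R)$ once $\A_C(R)\subseteq\GP^{\gorro}$;
\item that the theorem's remaining hypotheses hold.
\end{itemize}
I would take these directly from their use in Theorem \ref{Teo-AC}(i). That proof already relied on $\GP\subseteq\A_C(R)$ and on \cite[Theorem 8]{Estrada23} in exactly this configuration. I would verify only that replacing $\GP_{\A_C}$ by $\GP$, i.e. using the admissible pair $(\Proj(R),\Proj(R))$, changes nothing.
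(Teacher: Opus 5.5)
Your proposal is essentially the paper's proof. In both cases you show $\A_C(R)\subseteq\GP^{\gorro}$: directly in (i), and in (ii) through the Cartan--Eilenberg construction from Theorem~\ref{Teo-AC}(ii), with Lemma~\ref{Hom-exacto} in place of \cite[Lemma 3.6]{Becerril21}. You then apply \cite[Theorem 8]{Estrada23} to the complete pair $(\A_C(R),\A_C(R)^{\ortogonal})$, and the cotorsion-pair statement follows from special precovering.

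The inclusion $\GP\subseteq\A_C(R)$, which you (like the paper) take from \cite[Proposition 3.9]{EnJen-n-per}, can also be checked directly under either hypothesis. The module $C^{+}=\Hom_{\mathbb{Z}}(C,\mathbb{Q}/\mathbb{Z})\cong\Hom_S(C,S^{+})$ lies in $\mathcal{I}_C(R)\subseteq\A_C(R)$, so it has finite projective dimension. Hence for every totally acyclic complex $P$ of projectives, both $C\otimes_R P$ and $\Hom_S(C,C\otimes_R P)\cong P$ are exact, and this forces the cycles of $P$ into $\A_C(R)$.
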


\begin{proof}
(i) Indeed, the  condition $\A _{C}  (R) \subseteq \Proj (R) ^{\gorro}$ implies that $ \A _{C}  (R) \subseteq \GP   ^{\gorro}$, since every projective is Gorenstein projective.  Thus, from \cite[Theorem 8]{Estrada23} and since $(\A _C (R), \A _C (R) ^{\ortogonal})$ is a complete cotorsion pair, we get $\GP  $ is special precovering. 

(ii)  Take $M \in \A_{C} (R)$,  since $\pd (\mathcal{I}_C (R))\leq m$, then by the first part in Theorem  \ref{Teo-AC} (ii), we get an acyclic complex of projectives $ \widetilde{\Lambda} $, and another  $\Theta$. By Lemma \ref{Hom-exacto} the complex $ \widetilde{\Lambda} $ is $\Hom _R (-, \Proj (R))$-exact, since $\Proj (R) \subseteq \mathcal{SFI} ^{\cogorro}$. Then, in the acyclic  complex $\Theta$ we obtain $Q \in \GP$, that is $M \in \GP ^{\gorro}$. Thus, every $M \in \A_C (R)$ possesses a special $\GP$-precover, then  by \cite[Theorem 8]{Estrada23} we get that  $\GP$ is special precovering. 

The last part  follows from  \cite[Proposition 29]{Gill21}.
\end{proof}
The previous result allows us to obtain a family of cotorsion pairs  and  a family Hovey triples, which can be seen as a Gorenstein projective  counterpart of Rachid's work \cite{Rachid} from which the following statements are inspired,  but in a more general environment.

\begin{lem} \label{Lema01}
Let $R$ be a ring and $\Le \subseteq \Modu (R)$ such that $(\Proj (R), \Le)$ is a  GP-admissible pair.  For $M \in \Modu (R)$ and $m \in \mathbb{N}$ consider the following statements. 
\begin{itemize}
\item[(i)] $M \in [\GP _{\Le} ] ^{\gorro} _{m} $,
\item[(ii)] $\Ext ^1 _R ( M,E) =0$ for all $E \in [\Proj (R)^{\gorro} _m] ^{\ortogonal_1 } \cap \GP _{\Le} ^{\ortogonal _1}$.
\end{itemize}
Then  $\mathrm{(i)}\Rightarrow \mathrm{(ii)}$. If the pair  $(\GP _{\Le} , \GP _{\Le}  ^{\ortogonal _1})$ is a   complete cotorsion pair, then $\mathrm{(ii)}\Rightarrow \mathrm{(i)}$, and thus both conditions are equivalent. 
\end{lem}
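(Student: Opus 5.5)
We prove (i)$\Rightarrow$(ii) by dimension shifting along a finite $\GP_{\Le}$-resolution, and (ii)$\Rightarrow$(i) by splitting a special $\GP_{\Le}$-precover of $M$. Two facts about $(\Proj(R),\Le)$-Gorenstein projectives from \cite{BMS} will be used. First, $\Proj(R)\subseteq \GP_{\Le}$ and $\GP_{\Le}$ is resolving. Second, if $M\in[\GP_{\Le}]^{\gorro}_m$, then there is an exact sequence $0\to K\to G\to M\to 0$ with $G\in\GP_{\Le}$ and $K\in \Proj(R)^{\gorro}_{m-1}$. This is the GP-admissible version of \cite[Lemma 2.17]{Holm06}, and it is essentially \cite[Theorem 4.1]{BMS}.

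\textbf{(i)$\Rightarrow$(ii).} Take $M\in[\GP_{\Le}]^{\gorro}_m$ and $E\in[\Proj(R)^{\gorro}_m]^{\ortogonal_1}\cap\GP_{\Le}^{\ortogonal_1}$, and fix the sequence $0\to K\to G\to M\to 0$ above. Applying $\Hom_R(-,E)$ gives the exact sequence
\[
\Hom_R(G,E)\to\Hom_R(K,E)\to \Ext^1_R(M,E)\to\Ext^1_R(G,E).
\]
The last term is $0$ because $E\in\GP_{\Le}^{\ortogonal_1}$. It therefore suffices to show that every morphism $K\to E$ extends along $K\to G$.

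For this, the plan is to show $E\in \Proj(R)^{\gorro\,\ortogonal}_m$, i.e.\ that $\Ext^{\geq1}_R(P,E)=0$ for all $P\in\Proj(R)^{\gorro}_m$, not just $\Ext^1$. Since $\Proj(R)^{\gorro}_m$ is closed under syzygies, we get $\Ext^{i}_R(P,E)\cong \Ext^1_R(\Omega^{i-1}P,E)=0$, because $\Omega^{i-1}P\in\Proj(R)^{\gorro}_m$. Hence $E$ has injective dimension relative to $\Proj(R)^{\gorro}$ equal to zero.

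Next, form the pushout of $0\to K\to G\to M\to 0$ along a projective cover-type sequence. Since $G\in\GP_{\Le}$, there is an exact sequence $0\to G\to P\to G'\to 0$ with $P\in\Proj(R)$ and $G'\in\GP_{\Le}$. Pushing out along $G\to P$ yields $0\to M\to N\to G'\to 0$ and $0\to K\to P\to N\to 0$. Then $N\in\Proj(R)^{\gorro}_m$, since $K\in\Proj(R)^{\gorro}_{m-1}$ and $P$ is projective. Applying $\Hom_R(-,E)$ to $0\to M\to N\to G'\to 0$ gives
\[
\Ext^1_R(N,E)\to\Ext^1_R(M,E)\to \Ext^2_R(G',E).
\]
The first term vanishes by the paragraph above. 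For the last term, $G'$ is itself a first syzygy of some $G''\in\GP_{\Le}$, so $\Ext^2_R(G',E)\cong \Ext^1_R$ of a suitable cosyzygy, which is zero by $E\in\GP_{\Le}^{\ortogonal_1}$. Let me be careful here: dimension shifting in the first variable requires the sequence $0\to G'\to P'\to G''\to 0$, which yields $\Ext^1_R(G',E)\to\Ext^2_R(G'',E)\to\Ext^2_R(P',E)$ — the wrong direction. So instead I would use that $G'$ has a projective resolution whose syzygies stay in $\GP_{\Le}$, giving $\Ext^2_R(G',E)\cong\Ext^1_R(\Omega G',E)=0$ with $\Omega G'\in\GP_{\Le}$. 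This is the step where care is needed. Both outer terms vanish, so $\Ext^1_R(M,E)=0$.

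\textbf{(ii)$\Rightarrow$(i), assuming $(\GP_{\Le},\GP_{\Le}^{\ortogonal_1})$ is complete.} Take a special precover $0\to E\to G\to M\to 0$ with $G\in\GP_{\Le}$ and $E\in\GP_{\Le}^{\ortogonal_1}$. If $E\in[\Proj(R)^{\gorro}_m]^{\ortogonal_1}$, then (ii) gives $\Ext^1_R(M,E)=0$. The sequence splits, so $M$ is a direct summand of $G$, hence $M\in\GP_{\Le}\subseteq[\GP_{\Le}]^{\gorro}_m$.

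In general $E$ need not satisfy this, and this is the main obstacle. I would replace $M$ by its $m$-th syzygy $\Omega^mM$ and use a variant of (ii) for it. The target is: $\Ext^1_R(\Omega^m M,E')=0$ for every $E'\in\GP_{\Le}^{\ortogonal_1}$. By dimension shifting, this amounts to $\Ext^{m+1}_R(M,E')=0$. I would derive that vanishing from (ii) by showing that $\GP_{\Le}^{\ortogonal}$ is coresolving, which holds because the pair is hereditary. Take a coresolution of $E'$ inside $\GP_{\Le}^{\ortogonal}$ by modules that are also orthogonal to $\Proj(R)^{\gorro}_m$; candidates are $\Le$-objects of the form $\Proj(R)\cap\Le$ cosyzygies via admissibility. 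Then shift $\Ext^{m+1}$ down to an $\Ext^1$ against an $m$-th cosyzygy lying in $[\Proj(R)^{\gorro}_m]^{\ortogonal_1}\cap\GP_{\Le}^{\ortogonal_1}$. Granting this, $\Omega^mM\in{}^{\ortogonal_1}(\GP_{\Le}^{\ortogonal_1})=\GP_{\Le}$, and therefore $M\in[\GP_{\Le}]^{\gorro}_m$.
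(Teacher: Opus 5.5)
Your proof of (i)$\Rightarrow$(ii) is correct. It uses the same pushout as the paper but finishes differently. You apply $\Hom_R(-,E)$ to $0\to M\to N\to G'\to 0$ and use $\Ext^1_R(N,E)=0$ together with $\Ext^2_R(G',E)\cong\Ext^1_R(G,E)=0$, the latter via $0\to G\to P\to G'\to 0$. The paper instead shows that $\Hom_R(G,E)\to\Hom_R(K,E)$ is onto, because $K\to P$ factors through $G$. Your detour through $\Ext^{\geq 1}$-orthogonality to $\Proj(R)^{\gorro}_m$ is not needed.

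The gap is in (ii)$\Rightarrow$(i). Your reduction is sound: if $\Ext^{m+1}_R(M,E')=0$ for all $E'\in\GP_{\Le}^{\ortogonal_1}$, then $\Ext^1_R(\Omega^m M,E')=0$. Hence $\Omega^m M\in{}^{\ortogonal_1}(\GP_{\Le}^{\ortogonal_1})=\GP_{\Le}$ and $M\in[\GP_{\Le}]^{\gorro}_m$. But deriving that vanishing from (ii) is the entire content of the implication, and you only write ``granting this''.

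The mechanism you suggest would not work. To shift $\Ext^{m+1}_R(M,E')$ down to $\Ext^1_R(M,E'')$ along a coresolution $0\to E'\to W^0\to\cdots\to W^{m-1}\to E''\to 0$, you need $\Ext^{\geq 1}_R(M,W^i)=0$. Objects of $\Proj(R)\cap\Le$ give no such vanishing for an $M$ about which you only know (ii). GP-admissibility also does not embed an arbitrary $E'$ into such objects; it only provides cogenerators for projectives. Finally, such objects need not lie in $[\Proj(R)^{\gorro}_m]^{\ortogonal_1}$.

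The repair is to take the $W^i$ injective. Then $\Ext^j_R(X,E'')\cong\Ext^{j+m}_R(X,E')$ for every module $X$ and every $j\geq 1$. This gives:
\begin{itemize}
\item $E''\in[\Proj(R)^{\gorro}_m]^{\ortogonal_1}$, since $\Ext^1_R(P,E'')\cong\Ext^{m+1}_R(P,E')=0$ whenever $\pd(P)\leq m$;
\item $E''\in\GP_{\Le}^{\ortogonal_1}$, since $\Ext^1_R(G,E'')\cong\Ext^{m+1}_R(G,E')=0$. This uses $\GP_{\Le}^{\ortogonal_1}=\GP_{\Le}^{\ortogonal}$, which holds because each $G\in\GP_{\Le}$ fits in $0\to G_1\to P\to G\to 0$ with $G_1\in\GP_{\Le}$ and $P$ projective.
\end{itemize}
So (ii) yields $\Ext^{m+1}_R(M,E')\cong\Ext^1_R(M,E'')=0$, and your argument closes.

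With this step supplied, your route genuinely differs from the paper's. The paper takes a special preenvelope $0\to M\to H\to Q\to 0$ with $H\in\GP_{\Le}^{\ortogonal_1}$ and $Q\in\GP_{\Le}$. It then uses completeness of $(\Proj(R)^{\gorro}_m,[\Proj(R)^{\gorro}_m]^{\ortogonal_1})$ to split a sequence and get $H\in\Proj(R)^{\gorro}_m$, and concludes via \cite[Corollary 4.3 (c)]{BMS}. The injective-coresolution argument needs only that $(\GP_{\Le},\GP_{\Le}^{\ortogonal_1})$ is a cotorsion pair, not that it is complete.
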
 

\begin{proof}
We will use the following equality $\Proj (R) ^{\gorro} _m = \GP _{\Le} ^{\ortogonal _1 } \cap [ \GP_{\Le}] ^{\gorro} _m$. From \cite[Corollaries  5.2 (b) and 4.3 (c) ]{BMS} we have the containment  $\Proj (R) ^{\gorro} _m \subseteq  \GP _{\Le} ^{\ortogonal _1 } \cap[ \GP_{\Le}] ^{\gorro} _m$. Now if $M \in  \GP _{\Le} ^{\ortogonal _1 } \cap [\GP _{\Le} ] ^{\gorro} _{m}$ then from \cite[Theorem 4.1 (b)]{BMS} there is an exact sequence $\gamma : 0 \to M \to H  \to G \to 0$ with $H \in \Proj (R) ^{\gorro} _{m}$ and $G \in \GP _{\Le}$, this implies that the previous sequence  $\gamma$ splits, since $M \in \GP _{\Le} ^{\ortogonal _1}$, and so $M $ is direct summand of $H \in \Proj(R) ^{\gorro} _m$. 

$\mathrm{(i)}\Rightarrow \mathrm{(ii)}$. By induction over $m$.  If $m =0$ is clear. Now assume that $m > 0 $. From \cite[Theorem 4.1 (a)]{BMS}  there is an exact sequence $ \theta :0 \to K \to G \to M \to 0$ with $K \in \Proj (R) ^{\gorro} _{m-1} $ and $G \in \GP _{\Le}$. By definition there is an exact sequence $0 \to G \to P \to G' \to 0$ with $P \in \Proj (R)$ and $G' \in \GP _{\Le}$. We can construct the following p.o digram
$$\xymatrix{ 
         K  \ar@{=}[d] \ar@{^{(}->}[r]  & G _{} \ar@{^{(}->}[d]  \ar@{}[dr] |{\textbf{po}} \ar@{>>}[r] & M _{}   \ar@{^{(}->}[d]    \\
  K \ar@{^{(}->}[r]  & P   \ar@{>>}[r] & Q }$$
Where $Q \in \Proj (R)^{\gorro} _m$. For $\overline{E}\in [\Proj (R)^{\gorro} _m] ^{\ortogonal _1 }$, applying $\Hom _R (-,\overline{E})$ we obtain the following commutative diagram 
$$\xymatrix{ 
         \Hom _R (M,\overline{E})   \ar@{^{(}->}[r]  & \Hom _R (G,\overline{E}) _{}     \ar[r] & \Hom _R (K,\overline{E})  _{} \ar@{=}[d]      &   \\
  \Hom _R (Q,\overline{E}) \ar@{^{(}->}[r] \ar[u]  & \Hom _R (P,\overline{E})   \ar[r] \ar[u] & \Hom _R (K,\overline{E}) \ar[r] &\Ext^1  _R (Q,\overline{E}) =0, }  $$
  Thus $\Hom _R (G,\overline{E})  \to  \Hom _R (K,\overline{E}) $ is an epimorphism for all $\overline{E}\in [\Proj (R)^{\gorro} _m] ^{\ortogonal _1}$.  Now assume that $E \in [\Proj (R)^{\gorro} _m] ^{\ortogonal _1} \cap \GP _{\Le} ^{\ortogonal _1}$. From the above and $\theta$ we have the exact sequence 
  $$\Hom _R (G,E)  \twoheadrightarrow  \Hom _R (K,E) \to  \Ext ^1  _R (M,E)  \to \Ext ^1 _R (G,E) =0 $$
  Therefore, we conclude  $\Ext _R^1 (M, E) =0$ for all $E \in [\Proj (R)^{\gorro} _m] ^{\ortogonal _1} \cap \GP _{\Le} ^{\ortogonal _1}$.
  
  $\mathrm{(ii)}\Rightarrow \mathrm{(i)}$.  Let us suppose that $(\GP _{\Le}, \GP _{\Le} ^{\ortogonal _1})$ is a complete cotorsion pair and that for $M \in  \Modu (R)$ we have  $\Ext _R^1 (M, E) =0$ for all $E \in [\Proj (R)^{\gorro} _m] ^{\ortogonal _1} \cap \GP _{\Le} ^{\ortogonal _1}$.  We will use the equivalence of  \cite[Corollary 4.3 (c)]{BMS}. For $M \in  \Modu (R)$ there is an exact sequence $0 \to M \to H \to Q \to 0$, with $H \in  \GP _{\Le} ^{\ortogonal _1}$ and $Q \in \GP _{\Le}$. Applying $\Hom _R (-, E)$ we have the exact sequence 
  $$ \Ext _R^1 (Q, E)  \to \Ext _R^1 (H, E) \to  \Ext _R^1 (M, E) =0$$
  where $\Ext _R^1 (Q, E) =0$ since $Q \in \GP _{\Le}$ and $E \in \GP _{\Le} ^{\ortogonal _1}$. Therefore $\Ext _R^1 (H, E) =0$ for all $E \in [\Proj (R)^{\gorro} _m] ^{\ortogonal _1} \cap \GP _{\Le} ^{\ortogonal _1}$, we will use this fact at the end. 
  
  From \cite[Theorem 7.4.6]{EnJen00}, for $H$ there is an exact sequence $0 \to K' \to H' \to H \to 0$, with $H' \in \Proj (R) ^{\gorro}_m$ and $K' \in [\Proj (R) ^{\gorro}_m] ^{\ortogonal _1}$. Note that $H, H' \in \GP _{\Le} ^{\ortogonal_1}$ (since $\Proj (R)  \subseteq \GP _{\Le} ^{\ortogonal _1}$ implies $\Proj (R) ^{\gorro}_m  \subseteq \GP _{\Le} ^{\ortogonal _1}$ by the dual of  \cite[Lemma 2.6]{BMS}). This implies that $K' \in \GP _{\Le} ^{\ortogonal _1}$, i.e. $K' \in [\Proj (R) ^{\gorro}_m] ^{\ortogonal _1} \cap \GP _{\Le} ^{\ortogonal _1} $, so that  $\Ext ^1 _{R} (H, K') =0$. That is $0 \to K' \to H' \to H \to 0$  splits, therefore $H \in \Proj (R) ^{\gorro}_m$. Thus, the exact sequence $0 \to M \to H \to Q \to 0$ fulfils with the conditions of \cite[Corollary 4.3 (c)]{BMS}. 
\end{proof}

With the above, we are ready to obtain the expected family of cotorsion pairs. 
\begin{teo} \label{Family}
Let $R$ be a ring and $\Le \subseteq \Modu (R)$ such that $(\Proj (R), \Le)$ is a  GP-admissible pair. If   $(\GP _{\Le} , \GP _{\Le}  ^{\ortogonal _1})$ is a complete cotorsion pair, then for each $m > 0$ the pair 
\[  \displaystyle{\left( [\GP _{\Le}] ^{\gorro} _m ,\;  [\Proj (R)^{\gorro} _m] ^{\ortogonal _1} \cap \GP _{\Le} ^{\ortogonal _1}  \right) ,}
\]
is a complete and hereditary cotorsion pair.
\end{teo}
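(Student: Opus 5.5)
Write $\mathcal{B}_m := [\Proj (R)^{\gorro} _m] ^{\ortogonal _1} \cap \GP _{\Le} ^{\ortogonal _1}$. The plan is to get the cotorsion pair from Lemma \ref{Lema01}, completeness from the results of \cite{BMS} already used in that proof, and heredity from the resolving property of $[\GP_{\Le}]^{\gorro}_m$.

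\textbf{Cotorsion pair.} Lemma \ref{Lema01}, under our completeness hypothesis on $(\GP _{\Le} , \GP _{\Le}  ^{\ortogonal _1})$, gives exactly
\[
[\GP _{\Le}] ^{\gorro} _m = {^{\ortogonal_1}\mathcal{B}_m}.
\]
For the other equality, $\mathcal{B}_m = ([\GP _{\Le}] ^{\gorro} _m)^{\ortogonal_1}$, there are two inclusions.
\begin{itemize}
\item The inclusion $\mathcal{B}_m \subseteq ([\GP _{\Le}] ^{\gorro} _m)^{\ortogonal_1}$ is the implication (i)$\Rightarrow$(ii) of Lemma \ref{Lema01}.
\item For the reverse, take $E \in ([\GP _{\Le}] ^{\gorro} _m)^{\ortogonal_1}$. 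Since $\GP_{\Le} \subseteq [\GP_{\Le}]^{\gorro}_m$, we get $E \in \GP_{\Le}^{\ortogonal_1}$. Since $\Proj(R) \subseteq \GP_{\Le}$ gives $\Proj (R)^{\gorro}_m \subseteq [\GP_{\Le}]^{\gorro}_m$, we also get $E \in [\Proj(R)^{\gorro}_m]^{\ortogonal_1}$.
\end{itemize}
So the pair is a cotorsion pair.

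\textbf{Completeness.} I will produce, for each $M$, a special preenvelope $0 \to M \to B \to X \to 0$ with $B \in \mathcal{B}_m$ and $X \in [\GP_{\Le}]^{\gorro}_m$. Completeness of $(\GP_{\Le}, \GP_{\Le}^{\ortogonal_1})$ first gives $0 \to M \to H \to Q \to 0$ with $H \in \GP_{\Le}^{\ortogonal_1}$ and $Q \in \GP_{\Le}$. Next, the pair $(\Proj(R)^{\gorro}_m, [\Proj(R)^{\gorro}_m]^{\ortogonal_1})$ is complete by \cite[Theorem 7.4.6]{EnJen00}, so it gives a special preenvelope $0 \to H \to B \to P \to 0$ with $B \in [\Proj(R)^{\gorro}_m]^{\ortogonal_1}$ and $P \in \Proj(R)^{\gorro}_m$.

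Then $B \in \GP_{\Le}^{\ortogonal_1}$ as well. This holds because $H \in \GP_{\Le}^{\ortogonal_1}$, and $P \in \Proj(R)^{\gorro}_m \subseteq \GP_{\Le}^{\ortogonal_1}$ by the dual of \cite[Lemma 2.6]{BMS}, as in the proof of Lemma \ref{Lema01}. So $B \in \mathcal{B}_m$.

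The composite $M \to H \to B$ is a monomorphism. Its cokernel $X$ sits in an exact sequence $0 \to Q \to X \to P \to 0$. Both ends lie in $[\GP_{\Le}]^{\gorro}_m$, so $X$ does too, provided $[\GP_{\Le}]^{\gorro}_m$ is closed under extensions. That closure holds because this class equals ${^{\ortogonal_1}\mathcal{B}_m}$, and left Ext$^1$-orthogonal classes are always closed under extensions. For a cotorsion pair, special preenveloping is equivalent to completeness (Salce's lemma), so this step finishes completeness.

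\textbf{Heredity.} I will show that $[\GP_{\Le}]^{\gorro}_m$ is resolving. It contains $\Proj(R)$, and it is closed under extensions by the previous step. The remaining point is closure under kernels of epimorphisms. I would take this from \cite{BMS}, where, for a GP-admissible pair, $[\GP_{\Le}]^{\gorro}$ is shown to behave like a resolving class and $\resdim_{\GP_{\Le}}$ satisfies the usual inequality $\resdim(K) \le \max\{\resdim(G), \resdim(M)\}$ for an exact sequence $0\to K\to G\to M\to 0$. This requires $\GP_{\Le}$ itself to be closed under kernels of epimorphisms, which I expect \cite{BMS} provides in the GP-admissible setting, possibly together with $\Proj(R) \subseteq \Le$.

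This last step is the main obstacle. If no direct citation is available, I would argue via the Auslander–Buchweitz approximation \cite[Theorem 4.1]{BMS}. Alternatively, I would prove $\Ext^{2}(X,\mathcal{B}_m)=0$ for $X \in [\GP_{\Le}]^{\gorro}_m$ by dimension shifting with a projective cover, using that syzygies of modules in $[\GP_{\Le}]^{\gorro}_m$ stay in the class.
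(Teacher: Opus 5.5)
Your proof is correct. The identification of the cotorsion pair is exactly the paper's: Lemma \ref{Lema01} gives ${}^{\ortogonal_1}\mathcal{B}_m$, and the inclusion $\Proj(R)^{\gorro}_m\cup\GP_{\Le}\subseteq[\GP_{\Le}]^{\gorro}_m$ gives the reverse orthogonal. Completeness is where you take a different, dual route.

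The paper builds a special \emph{precover}. It takes a special $\GP_{\Le}$-precover $0\to K\to H\to M\to 0$ and a special $[\Proj(R)^{\gorro}_m]^{\ortogonal_1}$-preenvelope $0\to K\to S\to E\to 0$ of the kernel. A pushout then yields $0\to S\to G\to M\to 0$, with $G$ an extension of $H$ by $E$.

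You instead compose two special preenvelopes $M\hookrightarrow H\hookrightarrow B$, read off the cokernel as an extension of $Q$ by $P$, and finish with Salce's lemma. The ingredients are the same in both:
\begin{itemize}
\item extension-closure of $\GP_{\Le}^{\ortogonal_1}$;
\item the inclusion $\Proj(R)^{\gorro}_m\subseteq\GP_{\Le}^{\ortogonal_1}$;
\item extension-closure of $[\GP_{\Le}]^{\gorro}_m$.
\end{itemize}
Your version avoids the pushout diagram at the cost of the standard Salce step. It also makes explicit why $[\GP_{\Le}]^{\gorro}_m$ is closed under extensions, namely because it equals ${}^{\ortogonal_1}\mathcal{B}_m$, which the paper uses tacitly.

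For heredity, the paper does exactly what your primary plan does: it cites \cite[Corollary 4.10]{BMS} for closure of $[\GP_{\Le}]^{\gorro}_m$ under kernels of epimorphisms. Your fallback, as written, assumes the syzygy-closure it is supposed to prove, so on its own it does not close the step.

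There is a short self-contained argument from the right-hand class, and it does not need $\GP_{\Le}$ to be closed under kernels of epimorphisms.
\begin{itemize}
\item Every $G\in\GP_{\Le}$ sits in an exact sequence $0\to G'\to P\to G\to 0$ with $P$ projective and $G'\in\GP_{\Le}$, since $G'$ is another cycle of the same complex.
\item $\Proj(R)^{\gorro}_m$ is closed under syzygies.
\item Dimension shifting therefore gives $\Ext^2_R(G,B_1)\cong\Ext^1_R(G',B_1)$ and $\Ext^2_R(N,B_1)\cong\Ext^1_R(\Omega N,B_1)$. Hence $\GP_{\Le}^{\ortogonal_1}$ and $[\Proj(R)^{\gorro}_m]^{\ortogonal_1}$ are both closed under cokernels of monomorphisms, and so is $\mathcal{B}_m$.
\item $\mathcal{B}_m$ contains the injectives. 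So for $B\in\mathcal{B}_m$ and $0\to B\to I\to B'\to 0$ with $I$ injective, you get $B'\in\mathcal{B}_m$ and $\Ext^{i+1}_R(X,B)\cong\Ext^{i}_R(X,B')$ for $i\geq 1$.
\end{itemize}
Induction on $i$ then gives $\Ext^{\geq 1}_R([\GP_{\Le}]^{\gorro}_m,\mathcal{B}_m)=0$.
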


\begin{proof}
From Lemma \ref{Lema01} we know that  following;
\begin{align*}
[\GP _{\Le}] ^{\gorro} _m & = {^{\ortogonal _1} \displaystyle{\left( [\Proj (R)^{\gorro} _m] ^{\ortogonal _1} \cap \GP _{\Le} ^{\ortogonal _1} \right) }} & \text{and} & &  [\Proj (R)^{\gorro} _m] ^{\ortogonal _1} \cap \GP _{\Le} ^{\ortogonal _1} & \subseteq { (  [\GP _{\Le}] ^{\gorro} _m )^{\ortogonal _1}}.
\end{align*}
For the other hand we have  that $\Proj (R)^{\gorro} _m \cup \GP _{\Le} \subseteq [ \GP _{\Le}] ^{\gorro} _m \cup \GP _{\Le} = [ \GP _{\Le}] ^{\gorro} _m$. Then, taking orthogonals
\[ \displaystyle{ \left(  [ \GP _{\Le}] ^{\gorro} _m     \right) ^{\ortogonal _1}  \subseteq \left(   \Proj (R)^{\gorro} _m \cup \GP _{\Le}  \right) ^{\ortogonal _1 } =  (\Proj (R)^{\gorro} _m )^{\ortogonal _1} \cap \GP _{\Le}  ^{\ortogonal _1} }.
\]
This implies that $  \left(  [ \GP _{\Le}] ^{\gorro} _m     \right) ^{\ortogonal _1} =  (\Proj (R)^{\gorro} _m )^{\ortogonal _1} \cap \GP _{\Le}  ^{\ortogonal _1} $. Since the class $ [\GP _{\Le}] ^{\gorro} _m$ is closed by kernels of epimorphisms \cite[Corollary 4.10]{BMS}, we conclude that such cotorsion pair is hereditary. 
It remains to prove that such a pair is complete. We know that $(\GP _{\Le} , \GP _{\Le}  ^{\ortogonal _1})$ is complete as well as $(\Proj (R) ^{\gorro}_m , [\Proj (R) ^{\gorro} _m] ^{\ortogonal _1})$ \cite[Theorem 7.4.6]{EnJen00}, thus for $M \in \Modu (R)$ there is an exact sequence $0 \to K \to H \to M \to 0 \to 0$ with $H \in \GP _{\Le}$  and $K\in \GP _{\Le} ^{\ortogonal _1} $. And to this $K$ there is  $0 \to K \to S \to E \to 0$ with $S \in [\Proj (R) ^{\gorro} _m ] ^{\ortogonal _1}$ and $E \in \Proj (R) ^{\gorro} _m$. Consider the following p.o. diagram 

$$\xymatrix{ 
         K _{} ^{} \ar@{^{(}->}[d]   \ar@{^{(}->}[r]  \ar@{}[dr] |{\textbf{po}}  & H _{} \ar@{^{(}->}[d]   \ar@{>>}[r] & M _{}      \ar@{=}[d]   \\
  S ^{} \ar@{^{(}->}[r]  \ar@{>>}[d]  & G   \ar@{>>}[r]  \ar@{>>}[d] & M \\
    E  \ar@{=}[r] & E    &  }$$
   Where $S \in \GP _{\Le} ^{\ortogonal _1} $, since $K \in \GP _{\Le} ^{\ortogonal _1} $ and $E \in  \Proj (R) ^{\gorro} _m \subseteq \GP _{\Le} ^{\ortogonal _1}$. That is $S \in [\Proj (R) ^{\gorro} _m ] ^{\ortogonal _1} \cap \GP _{\Le} ^{\ortogonal _1} $. Now, since $H \in \GP _{\Le} \subseteq [\GP _{\Le}] ^{\gorro} _m$ and $E \in \Proj (R) ^{\gorro} _m \subseteq  [\GP _{\Le}] ^{\gorro} _m$ it follows that $G \in  [\GP _{\Le}] ^{\gorro} _m $. Thus the exact sequence that works is $0 \to S \to G \to M \to 0$.
\end{proof}
 As an expected consequence we also obtain a Hovey triple in $\Modu (R)$.
\begin{cor} \label{CorFam}
Let $R$ be a ring and $\Le \subseteq \Modu (R)$ such that $(\Proj (R), \Le)$ is a  GP-admissible pair. If   $(\GP _{\Le} , \GP _{\Le}  ^{\ortogonal _1})$ is a complete cotorsion pair, then for each $m > 0$ there is a hereditary Hovey triple in $\Modu (R)$ given by 
\[  \displaystyle{\left( [\GP _{\Le}] ^{\gorro} _m ,\;  \GP _{\Le} ^{\ortogonal _1}, [\Proj (R)^{\gorro} _m] ^{\ortogonal _1}   \right) }
\]
Whose homotopy category is the stable category $[\GP _{\Le}] ^{\gorro} _m \cap  [\Proj (R)^{\gorro} _m] ^{\ortogonal _1} / ( [\Proj (R)^{\gorro} _m] \cap [\Proj (R)^{\gorro} _m] ^{\ortogonal _1})$. 
\end{cor}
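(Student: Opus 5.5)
The plan is to apply Gillespie's correspondence between Hovey triples and pairs of compatible complete cotorsion pairs (Hovey's theorem in the form of \cite{Gill16}). Write $\mathcal{Q} := [\GP _{\Le}] ^{\gorro} _m$, $\W := \GP _{\Le} ^{\ortogonal _1}$ and $\mathcal{R} := [\Proj (R)^{\gorro} _m] ^{\ortogonal _1}$. It suffices to check four things:
\begin{enumerate}
\item $(\mathcal{Q}, \W \cap \mathcal{R})$ is a complete hereditary cotorsion pair;
\item $(\mathcal{Q} \cap \W, \mathcal{R})$ is a complete hereditary cotorsion pair;
\item $\W$ is thick;
\item $\mathcal{Q} \cap \W = \Proj (R)^{\gorro} _m$, the left half of the second pair.
\end{enumerate}

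Items (1) and (4) are essentially done already. Item (1) is exactly Theorem \ref{Family}. For (4), the equality $\Proj (R) ^{\gorro} _m = \GP _{\Le} ^{\ortogonal _1 } \cap [ \GP_{\Le}] ^{\gorro} _m$ was proved at the start of the proof of Lemma \ref{Lema01}. Granting (4), item (2) becomes the pair $(\Proj (R)^{\gorro}_m , [\Proj (R)^{\gorro} _m] ^{\ortogonal _1})$, which is complete by \cite[Theorem 7.4.6]{EnJen00}. It is hereditary because $\Proj (R)^{\gorro}_m$ is resolving: it is closed under extensions and kernels of epimorphisms by the usual dimension-shifting for projective dimension.

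The main step is (3), thickness of $\W = \GP _{\Le} ^{\ortogonal _1}$. Because $(\GP_{\Le}, \W)$ is assumed to be a complete cotorsion pair and $\GP _{\Le}$ is resolving (\cite{BMS}), the pair is hereditary, so $\W = \GP _{\Le} ^{\ortogonal}$. This gives closure under direct summands and extensions, and under cokernels of monomorphisms by coresolvingness. The remaining closure is under kernels of epimorphisms between objects of $\W$. Take $0\to A\to B\to C\to 0$ with $B, C \in \W$. For $G\in \GP_{\Le}$, use a short exact sequence $0\to G\to P\to G'\to 0$ with $P$ projective and $G'\in\GP_{\Le}$, and dimension-shift: $\Ext^{i}_R(G,C)\cong \Ext^{i+1}_R(G',C)=0$. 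The long exact sequence then gives $\Ext^{i}_R(G,A)=0$ for all $i\ge 1$. This step, and the use of heredity to pass from $\ortogonal_1$ to $\ortogonal$, is where I expect to need the most care. In particular I must check that $\GP_{\Le}$ is closed under the relevant cosyzygies, which holds by the definition of $(\Proj(R),\Le)$-Gorenstein projectives in \cite[Definition 3.2]{BMS}.

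With (1)--(4) established, Gillespie's theorem gives the hereditary Hovey triple $(\mathcal{Q}, \W, \mathcal{R})$. The homotopy category is then the stable category of the bifibrant objects $\mathcal{Q}\cap\mathcal{R}$ modulo the cofibrant-fibrant-trivial objects $\mathcal{Q}\cap\W\cap\mathcal{R} = \Proj (R)^{\gorro}_m \cap [\Proj (R)^{\gorro}_m]^{\ortogonal_1}$, by the standard description in \cite{Gill16}. This is the stated stable category.
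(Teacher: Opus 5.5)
Your proof is correct and uses the same ingredients as the paper's. These are Theorem \ref{Family} for the pair $([\GP_{\Le}]^{\gorro}_m, [\Proj(R)^{\gorro}_m]^{\ortogonal_1}\cap\GP_{\Le}^{\ortogonal_1})$, \cite[Theorem 7.4.6]{EnJen00} for $(\Proj(R)^{\gorro}_m,[\Proj(R)^{\gorro}_m]^{\ortogonal_1})$, and the identity $\Proj(R)^{\gorro}_m=\GP_{\Le}^{\ortogonal_1}\cap[\GP_{\Le}]^{\gorro}_m$ from the proof of Lemma \ref{Lema01}. The difference is how the middle class is handled.

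The paper applies Gillespie's theorem \cite[Theorem 1.2]{Gill15}. It only checks that the two pairs are compatible: the right-hand classes are included, and the cores are equal, which is where the identity above enters. The theorem then supplies a unique thick class completing a Hovey triple. You instead take $\W=\GP_{\Le}^{\ortogonal_1}$ from the start, prove it is thick, and apply Hovey's correspondence directly.

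The paper's route is shorter, but it never verifies that the thick class produced by \cite{Gill15} is $\GP_{\Le}^{\ortogonal_1}$. That class consists of the modules $X$ admitting $0\to X\to \tilde R\to\tilde Q\to 0$ with $\tilde R\in[\Proj(R)^{\gorro}_m]^{\ortogonal_1}\cap\GP_{\Le}^{\ortogonal_1}$ and $\tilde Q\in\Proj(R)^{\gorro}_m$. Showing it lies inside $\GP_{\Le}^{\ortogonal_1}$ needs exactly the closure under kernels of epimorphisms that you prove. So your version justifies the triple with the middle class as actually stated.

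There is one slip in that step to fix. The isomorphism $\Ext^i_R(G,C)\cong\Ext^{i+1}_R(G',C)$ does not give $\Ext^1_R(G,A)=0$. At $i=1$ the long exact sequence would need $\Hom_R(G,B)\to\Hom_R(G,C)$ to be onto, which you have not shown. Shift on $A$ instead. From $0\to G\to P\to G'\to 0$ you get $\Ext^i_R(G,A)\cong\Ext^{i+1}_R(G',A)$ for $i\geq 1$. In the long exact sequence of $\Hom_R(G',-)$ applied to $0\to A\to B\to C\to 0$, the group $\Ext^{i+1}_R(G',A)$ sits between $\Ext^i_R(G',C)=0$ and $\Ext^{i+1}_R(G',B)=0$, so it vanishes. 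With that correction the argument is complete.
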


\begin{proof}
We will apply \cite[Theorem 1.2]{Gill15}. Note that the hereditary and complete cotorsion pairs $([\GP _{\Le}] ^{\gorro} _m ,\;  [\Proj (R)^{\gorro} _m] ^{\ortogonal _1} \cap \GP _{\Le} ^{\ortogonal _1} )$ and $(\Proj (R)^{\gorro} _m, [\Proj (R)^{\gorro} _m] ^{\ortogonal _1})$ are compatible, since from the proof of Lemma \ref{Lema01} we know that $\Proj (R) ^{\gorro} _m = \GP _{\Le} ^{\ortogonal _1 } \cap [ \GP_{\Le}] ^{\gorro} _m$. While the last assertion follows from \cite[Theorem 6.21]{Stovi14}.
\end{proof}

\begin{rk}
Of course Theorem \ref{Family} is applicable to $\Le = \Proj (R)$, and so we return to $\GP$. Thus, we can see that $(\GP , \GP ^{\ortogonal})$ is an hereditary and complete cotorsion pair, if and only if there exists  a non-negative integer $m$ such that $([\GP]^{\gorro} _m , {[\GP]^{\gorro} _m} ^{\ortogonal}) )$ is an hereditary and complete cotorsion pair. Indeed, if $([\GP]^{\gorro} _m , {[\GP]^{\gorro} _m} ^{\ortogonal}) )$ is hereditary and complete. Then, from \cite[Theorem 8]{Estrada23}, also is $(\GP , \GP ^{\ortogonal})$, since trivially $\GP \subseteq \GP ^{\gorro} _m$ and all $M \in \GP ^{\gorro} _m$ possesses a special $\GP$-precover. The other implication is Theorem \ref{Family}. 
\end{rk}

\bigskip

\textbf{Acknowledgements} The author want to thank professor Raymundo Bautista (Centro de Ciencias Matemáticas - Universidad Nacional Autónoma de México) for several helpful discussions on the results of this article.\\

\textbf{Funding} The author was fully supported by a CONAHCyT (actually renamed SECIHTI)  posdoctoral fellowship CVU 443002, managed by the Universidad Michoacana de San Nicolas de Hidalgo at the Centro de Ciencias Matemáticas, UNAM.






\begin{thebibliography}{20}

\bibitem{AuRe95} M. Auslander, I. Reiten, S.O. Smalo, S.O. \textit{Representation Theory of Artin Algebras}. Cambridge Studies in
Advanced Mathematics 36, Cambridge University Press, Cambridge 1995.








\bibitem{Araya} T. Araya, R.  Takahashi, Y Yoshino,  \textit{Homological invariants associated to semi-dualizing bimodules}, Journal of Mathematics of Kyoto University, 2005, vol. 45, no 2, p. 287-306.


\bibitem{Becerril21} V. Becerril, \textit{Relative global Gorenstein dimensions}, J. of Algebra and its Applications (2022) DOI:10.1142/S0219498822502085



\bibitem{BMS} V. Becerril, O.  Mendoza, V. Santiago. \textit{Relative Gorenstein objects in abelian categories}, Comm.  Algebra (2020) DOI: 10.1080/00927872.2020.1800023





\bibitem{BennisGlobal} D. Bennis, N. Mahdou, \textit{Global gorenstein dimensions}, Proceedings of the American Mathematical Society, 2010, vol. 138, no 2, p. 461-465.

\bibitem{Bel} A. Beligiannis, \textit{Cohen-Macaulay modules, (co)torsion pairs and virtually Gorenstein algebras}. J. Algebra
288, 137-211 (2005).

\bibitem{Bel-Rei} A. Beligiannis, I. Reiten, \textit{Homological and homotopical aspects of torsion theories}, Mem. Amer. Math. Soc.
188, 883(2007).



\bibitem{Cartan} H. Cartan, S. Eilenberg, \textit{Homological Algebra},  Princeton Univ. Press, Princeton,  N.J., 1956.

\bibitem{Chris} L. W. Christensen, S. Estrada, P. Thompson, \textit{Five theorems on Gorenstein global dimensions} In: Leroy, A., Jain, S.K. (eds), Algebra and Encoding Theory, Contemp. Math. Vol 785. pp. 67-78. (Amer. Math. Soc., Providence RI,2023)

\bibitem{Holm06} L. W. Christensen, A. Frankild, H. Holm, \textit{On Gorenstein projective, injective and flat dimensions-a functorial description with applications} Journal of Algebra 2006, Vol 302, 231-279. 

\bibitem{Cortes} M. Cortés-Izurdiaga, J. Šaroch, \textit{The cotorsion pair generated by the Gorenstein projective modules and $\lambda$-pure-injective modules}. Israel Journal of Mathematics, 2025, p. 1-30.














\bibitem{EnJen93} E. E. Enochs, O. M. G. Jenda, \textit{Copure injective resolutions, flat resolvents and dimensions}, Commentationes Mathematicae Universitatis Carolinae, 1993, vol. 34, no 2, p. 203-211.


\bibitem{EnJen00} E. Enochs and O. M. G. Jenda, \textit{Relative Homological Algebra}, De Gruyter Expositions in Mathematics no. 30, Walter De Gruyter, New York, 2000.


\bibitem{EnJen-n-per} E. E. Enochs, O. M. G. Jenda, J. A.  L\'opez-Ramos, \textit{Dualizing modules and n-perfect rings}. Proceedings of the Edinburgh Mathematical Society, 2005, vol. 48, no 1, p. 75-90.

\bibitem{Emma24} I. Emmanouil, I. Kaperonis,  \textit{On K-absolutely pure complexes} Journal of Algebra, 2024, vol. 640, p. 274-299.

\bibitem{Emma25} I. Emmanouil, O. Talelli, \textit{Triviality criteria for unbounded complexes}, Journal of Algebra, 2025, vol. 663, p. 786-814.

\bibitem{Estrada23}  S. Estrada, A. Iacob, \textit{Gorenstein projective precovers and finitely presented modules},  Rocky Mountain Journal of Mathematics, 2024, vol. 54, no 3, p. 715-721.


\bibitem{EA17} S. Estrada, A. Iacob,  K. Yeomans, \textit{Gorenstein projective precovers}, Mediterranean Journal of Mathematics, 2017, vol. 14, no 1, p. 1-10.




\bibitem{Foxby} H.-B. Foxby  \textit{Gorenstein modules and related modules}. Math. Scand., 31:267-284, 1973.

\bibitem{Gedrich} T.V. Gedrich,  K.W Gruenberg, \textit{Complete cohomological functors on groups} Topology Appl. 25,
203-223 (1987).

\bibitem{Gill17} J. Gillespie, \textit{On Ding injective, Ding projective and Ding flat modules and complexes}, Rocky Mountain Journal of Mathematics, 2017, vol. 47, no 8, p. 2641-2673.

\bibitem{Gill15} J. Gillespie, \textit{How to construct a Hovey triple from two cotorsion pairs}, Fund. Math. 230(3)(2015), 281-289.

\bibitem{Gill16} J. Gillespie, \textit{Gorenstein complexes and recollements from cotorsion pairs}. Advances in Mathematics, 2016, vol. 291, p. 859-911.


\bibitem{Gill21} J. Gillespie, A. Iacob, \textit{Duality pairs, generalized Gorenstein modules, and Ding injective envelopes}, Comptes Rendus. Math\'ematique 360.G4 (2022): 381-398, DOI:https:10.5802/crmath.306





\bibitem{HolmWhite} H. Holm, D. White, \textit{Foxby equivalence over associative rings}, Journal of Mathematics of Kyoto University, 2007, vol. 47, no 4, p. 781-808.
\bibitem{Holm04-1} H. Holm,  \textit{Rings with finite Gorenstein injective dimension}, Proceedings of the American Mathematical Society, 2004, vol. 132, no 5, p. 1279-1283.




\bibitem{Huang} Z. Huang, \textit{Duality pairs induced by Auslander and Bass classes}, Georgian Mathematical Journal, 2021, vol. 28, no 6, p. 867-882.







\bibitem{Mar} R. Marczinzik, \textit{Gendo-symmetric algebras, dominant dimensions and Gorenstein homological algebra}, arXiv preprint arXiv:1608.04212, 2016.

 \bibitem{Rachid} R. El Maaouy, \textit{Model structures, $n$-Gorenstein flat modules and PGF dimensions}, Proceedings of the Edinburgh Mathematical Society, 2024, vol. 67, no 4, p. 1241-1264.


\bibitem{Guan}  W. Li, J. Guan, B. Ouyang, \textit{Strongly $FP$-injective modules}, Commun. Algebra 45 (2017) 3816-3824.

\bibitem{Ringel} C. M. Ringel,  P. Zhang, \textit{Gorenstein-projective and semi-Gorenstein-projective modules}, Algebra \& Number Theory, 2020, vol. 14, no 1, p. 1-36.

\bibitem{Stovi14} J.  \v{S}t'ov\'i\v{c}ek, \textit{Exact model categories, approximation theory, and cohomology of quasi-coherent sheaves},
Advances in Representation Theory of Algebras, EMS Series of Congress Reports, European Math. Soc. Publishing House, 2014, pp. 297-367. DOI:10.4171/125-1/10.

\bibitem{Wang-Li} J. F. Wang,  H. H. Li, \textit{When do the Gorenstein Injective Modules and Strongly Cotorsion Modules Coincide?}. Bulletin of the Iranian Mathematical Society, 2023, vol. 49, no 6, p. 82. 

\bibitem{Xu} J. Z. Xu, \textit{Minimal injective and flat resolution over Gorenstein rings}, J. Algebra 175, 451-477 (1995).

\bibitem{Stov} J. \v{S}aroch,  J.  \v{S}t'ov\'i\v{c}ek, \textit{Singular compactness and definability for $\Sigma$-cotorsion and Gorenstein modules}, Selecta Mathematica, 2020, vol. 26, no 2, p. 1-40.

 
 


 
\bibitem{Yang14}  G. Yang G, L. Liang  \textit{All modules have Gorenstein flat precovers. Communications in Algebra} 2014; 42: 3078-3085.
 





\end{thebibliography}
\end{document}